\newcommand{\legendlineb}[3]{\tikz[baseline=-3pt]{\path[draw=#1,#2,line width=#3](0,0)--(0.63,0);}}
\def\XXint#1#2#3{{\setbox0=\hbox{$#1{#2#3}{\int}$ }
		\vcenter{\hbox{$#2#3$ }}\kern-.6\wd0}}
\theoremstyle{definition}
\theoremstyle{plain}
\newtheorem{theorem}{Theorem}
\theoremstyle{plain}
\theoremstyle{plain}
\newtheorem{corollary}{Corollary}
\theoremstyle{plain}
\newtheorem{remark}{Remark}
\newcommand{\R}{\mathds{R}}
\newcommand{\N}{\mathds{N}}
\newcommand{\sn}{\mathrm{sn}}
\newcommand{\cn}{\mathrm{cn}}
\newcommand{\dn}{\mathrm{dn}}
\begin{document}
\begin{center}
{\LARGE \bf First Passage Time of Nonlinear Diffusion Processes with Singular Boundary Behavior} \\[3mm]
  {\large L. Dostal$^1$ and N. Sri Namachchivaya$^{2}$}\\[2mm]
$^1$\small Institute of Mechanics and Ocean Engineering, Hamburg University of Technology, Hamburg, Germany\\
$^2$ Department of Applied Mathematics, University of Waterloo, Waterloo, Ontario, Canada \label{firstpage}
\end{center}

\begin{abstract}
New theorems for the moments of the first passage time of one dimensional nonlinear stochastic processes with an entrance boundary $x_e$ are formulated. This important class of one dimensional stochastic processes results among others from approximations of the energy or amplitude of second order nonlinear stochastic differential equations.
Since the diffusion of a stochastic process vanishes at an entrance boundary, $x_e$ is called a singular point of the stochastic process. The theorems for the moments of the first passage times are validated based on existing analytical results. In addition, the first passage times of a nonlinear stochastic differential equation, which is important for the determination of dangerous ship roll dynamics, are calculated. The proposed analytical expressions for the moments of the first passage times can be calculated very fast using standard quadrature formulas.
\end{abstract}
\begin{center}
\small \textbf{Keywords: first passage time; random
excitation; stochastic dynamics, Duffing oscillator} \normalsize
\end{center}

\section{Introduction}
A very important problem in the field of diffusion processes and random dynamical systems deals with the determination of  times of the considered nonlinear diffusion process to reach certain boundaries for the first time, starting at a prescribed initial condition.

Using approximation techniques such as stochastic averaging, as described for example in \cite{namachchivaya:1991,roberts1978first,dostal:2012}, many nonlinear problems of higher order can be approximated  by one dimensional It\^o stochastic differential equations for the total energy or the response amplitude. 
Such one dimensional stochastic processes have singular boundaries where the diffusion becomes zero.

This makes it necessary to analyze the limiting behavior of these diffusion processes at such boundaries, since standard formulas or numerical calculations can not be applied appropriately without considering these singularities.
One dimensional diffusion processes of this kind arise from approximations of the energy or amplitude of important second order nonlinear problems, for example dynamical systems which undergo a co-dimension two bifurcation \cite{namachchivaya:1991}, oscillators with nonlinear damping \cite{roberts1978first}, pendulum or ship roll dynamics \cite{roberts:2000,dostal:2012,dostal:2016,dostal:2017b}.

In many applications it is important to make sure that the response process of a system does not leave a safe domain during its operation time. Then the first time the response process leaves the safe domain has to be determined.
Being able to establish formulas for such first passage times, leads to a major impact on the determination of the system reliability and on the expected duration before system failures occur.

After introducing the necessary theory of diffusion processes, we state our new theorems for the moments of the first passage time in section \ref{sec:Firstpassagetime}, followed by three example applications of the presented theory in sections \ref{sec:example} to \ref{sec:exampleParametric}.

In these sections the first passage time of a Linear system with external excitation is analyzed first, in order to illustrate the theory. The obtained results are validated using previously obtained results by \cite{ariaratnam:1976}. Then results of the presented theory for a forced and damped Mathieu oscillator are analyzed and validated. The Mathieu oscillator is widely used for the modeling of parametrically excited physical systems and has been the subject of many studies, for example \cite{ariaratnam:1976} \cite{vanvinckenroye2017average}.
Finally, results are shown for a Duffing oscillator forced by external and parametric excitation.
This nonlinear system has many applications. It is for example important for the dynamics and capsizing analysis of ships \cite{dostal:2012} and for the analysis of energy harvesting \cite{Yurchenko:2013,dostal:2017b}.

\section{One dimensional diffusion process}
For many problems it is necessary to consider a time homogeneous regular diffusion $\{x(t),t>0\}$ on the interval $D=(x_l;x_c]$, which satisfies the Itô stochastic differential equation
\begin{equation}\label{eq:Ito}
\mathrm{d}x(t)=m(x(t),t)\,\mathrm{d}t+\sigma(x(t),t)\,\mathrm{d}W,
\end{equation}
where $m: \R \times \R \rightarrow \R,$ and $\sigma: \R \times \R \rightarrow \R$ are measurable functions, see e.g. \cite{Oksendal:1992}.

As shown in the previous section, such processes result for example from stochastic averaging of general nonlinear oscillators.

The differential generator $\mathcal{L}$ of the diffusion $x$ defined by the It\^{o} equation \eqref{eq:Ito} is
given by
\begin{equation}\label{eq:generator}
\mathcal{L}p(x)=m(x)\frac{\partial}{\partial x}p(x)+\frac{1}{2}\sigma^2(x)\frac{\partial^2}{\partial x^2}p(x)
\end{equation}
and the associated adjoint operator is
\begin{equation}\label{eq:AdjointGenerator}
\mathcal{L^*}p(x)=-\frac{\partial}{\partial x} \left(m(x)p(H)\right)+\frac{1}{2}\frac{\partial^2}{\partial x^2}\left(\sigma^2(x)p(x)\right).
\end{equation}
We introduce the scale measure
\begin{equation}
S(x)=\int^{x}s(x)dx,
\end{equation}
where
\begin{equation}\label{eq:scaling dense}
s(y)=\exp\left(-2\int^{y}\frac{m(\theta)}{\sigma^2(\theta)}d\theta\right)\notag
\end{equation}
is the scale density.
After defining the speed density
\begin{equation}\label{eq:speeddensity}
\mu(y)=1/(\sigma^2(y) s(y))
\end{equation}
and the speed measure $M$ by
\begin{equation}
dM(y)=\mu(y)dy,
\end{equation}
we can transform the operator $\mathcal{L}$ to
\begin{equation}
\mathcal{L}p(x)=\frac{1}{2}\frac{\partial}{\partial M}\left(\frac{\partial p(x)}{\partial S} \right),
\end{equation}
such that the drift is identically zero. A discussion on the meaning of scale density $s$ and speed density
$\mu$ can be found in \cite{karlin:1981}.

The stationary probability density function
$p_{st}$ associated with \eqref{eq:Ito}, is the solution of
the Fokker-Planck equation
\begin{equation}\label{eq:FPKeq}
\mathcal{L}^*p_{st}(x)=0,
\end{equation}
where $\mathcal{L}^*$ is the adjoint of the operator  $\mathcal{L}$.
The solution of \eqref{eq:FPKeq} can be given in terms of
\begin{equation}
p_{st}(x)=\mu(x)[c_1S(x)+c_2].
\end{equation}
The coefficients $c_1$ and $c_2$ are determined by the boundary and
normality conditions.\\
\\
Following \citep{karlin:1981} the (left)
boundaries can be classified as
\begin{itemize}
\item Entrance, if
\begin{equation}\label{eq:EingangsrandbedingungKarlin}
\Sigma_l(x_l)=\infty\;\;  \mathrm{and} \;\; N_l(x_l)<\infty,
\end{equation}
\item reflecting, if
\begin{equation}\label{eq:reflektierenderandbedingungKarlin}
\Sigma_l(x_l)<\infty,\;\; N_l(x_l)<\infty \;\;  \mathrm{and} \;\; M(x_l)=0,
\end{equation}
\item exit, if
\begin{equation}\label{eq:AusgangrandbedingungKarlin}
\Sigma_l(x_l)<\infty  \;\;  \mathrm{und} \;\;M(x_l,x]=\infty.
\end{equation}
\end{itemize}
Here, $\Sigma_l(x_l)$ is the time
to reach the left boundary $x_l$ starting at $x_0\in[x_l,x_c]$,
whereas $N_l(x_l)$ is the time to reach $x_0\in(x_l,x_c]$ starting at
$x_l$. These measures are given by
\begin{equation}\label{eq:N_l}
N_l(x_l)=\int_{x_l}^{x_0}\left\{\int_{z}^{x_0} s(y)dy\right\}
\mu(z)dz=\int_{x_l}^{x_0} S[z,x_0]\, \mu(z)\,\mathrm{d}z,
\end{equation}
and
\begin{equation}\label{eq:Sigma_l}
\Sigma_l(x_l)=\int_{x_l}^{x_0}  \left\{\int_{x_l}^{z}s(y)dy\right\}
 \mu(z)dz=\int_{x_l}^{x_0} S(x_l,z]\, \mu(z)\,\mathrm{d}z,
\end{equation}
where
\begin{equation}
S(x_l,z]:=\lim\limits_{a\downarrow x_l}  S[a,z],\notag
\end{equation}
with the definition
\begin{equation}
S[a,b]:=\int_{a}^{b}s(y)\, \mathrm{d}y , \quad a,b \in \R. \notag
\end{equation}
If we assume an entrance boundary at $x_e$ and a reflecting boundary at $x_c$ for the process $x(t)\in[x_e;x_c]$, then $c_1=0$.
In this case a stationary solution of the
Fokker-Planck equation \eqref{eq:FPKeq} exists and is given by
\begin{equation}\label{eq:statDensity}
p_{st}(x)=\frac{c_2}{\sigma^2(x)}\exp\left(2\int_{x_e}^x\frac{m(x)}{\sigma^2(x)}\right).
\end{equation}

\section{First passage time}\label{sec:Firstpassagetime}
We are now looking for the mean time until the process $x(t)$ reaches certain
values $x_c$ starting at an initial value $x_0$.
Let $\triangle \in D=(x_l,x_c)$ be an inner point.
The first passage time of $x_c$ starting at $ x_0, \; \triangle <x_0 <x_c, $ at time $ t_0 $ is defined by $T_{x_c}(x_0)=\inf\{ t:x(t)=x_c|x(t_0)=x_0 \}$.
The $n$-th moment $M_n (x_0) $ of the first passage time $T_{x_c}(x_0)$
is given by the generalized Pontryagin equation
\begin{equation}\label{eq:PontryaginGleichung}
\mathcal{L}\,M_n(x_0)=-n\, M_{n-1}(x_0),\;\;\; M_n(x_c)=0,
\end{equation}
with $M_0(x_0):=1$. This equation can be  solved for $ n = 1 $ by
\begin{equation}\label{eq:1th_moment_exittime}
M_1(x_0)=2 \left [ 1-\frac{S[x_0,x_c]}{S[\triangle, x_c]}   \right]\int_{x_0}^{x_c} S[z, x_c]\,\mu(z)\,\mathrm{d}z+2 \,  \frac{S[x_0,x_c]}{S[\triangle, x_c]}   \int_{\triangle}^{x_0} S[\triangle,z]\, \mu(z)\,\mathrm{d}z.
\end{equation}
This formula is equivalent to the formula from \cite{karlin:1981} on page 197.\\
\\
In the following, we consider the singular case, for which the diffusion at the left boundary $x_l$ disappears, i.e. $\sigma^2 (x_l) = 0 $. The point $ x_l $ is then called a singular point of the diffusion defined by the generator $ \mathcal{L} $ . It is obvious that due to the singularity in equation \eqref{eq:scaling dense} at $\sigma^2 (x_l) = 0 $, the limit behavior of equations \eqref{eq:speeddensity} and \eqref{eq:1th_moment_exittime} has to be determined.\\
\\
For the singular case the mean first passage time $ M_1 (x_0) $ from equation \eqref{eq:1th_moment_exittime} can be determined.
This yields our main result in the next Theorem.
\begin{theorem}(Main Theorem) \label{th:meanexittime}\\
Let $x_c$ be a regular or an exit boundary of the diffusion process $x(t)$ as defined by the Itô equation \eqref{eq:Ito}.
Let further\\
(i) $x_l$ be an entrance boundary of the process $x(t)$\\
or\\
(ii) $S(x_l,z]=\infty$ and $M(x_l,z]$ $\forall x_l<z\leq x_c$.\\
Then the mean time until $x(t)$ reaches $x_c$ starting
at any $x_0\in[x_l,x_c]$ is given by
\begin{equation}\label{eq:meanexittime}
M_1(x_0)=2\int_{x_0}^{x_c}\left [\int_{z}^{x_c}s(y)\, \mathrm{d}y\right ] \mu(z)dz + 2\,\int_{x_0}^{x_c}s(y)\, \mathrm{d}y \int_{x_l}^{x_0}\mu(z)dz
\end{equation}
and $M_1(x_0)<\infty$ $\forall\; x_0\in[x_l;x_c]$.
\end{theorem}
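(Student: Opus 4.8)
The plan is to derive \eqref{eq:meanexittime} by letting the interior point $\triangle$ in \eqref{eq:1th_moment_exittime} descend to the singular boundary, $\triangle\downarrow x_l$. For each inner $\triangle$ the right-hand side of \eqref{eq:1th_moment_exittime}, call it $M_1^{\triangle}(x_0)$, is the mean first passage time of the diffusion reflected at $\triangle$ and absorbed at $x_c$; as $\triangle$ decreases the reflecting barrier recedes toward $x_l$, the admissible excursions grow, and these mean times increase monotonically. Hence $\lim_{\triangle\downarrow x_l}M_1^{\triangle}(x_0)$ exists in $(0,\infty]$, and by monotone convergence of the corresponding hitting times it equals the mean first passage time of the singular process on $[x_l,x_c]$. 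It therefore suffices to evaluate the limit of the right-hand side of \eqref{eq:1th_moment_exittime} and to check that it is finite. (I read the second condition in case (ii) as $M(x_l,z]<\infty$ for all $x_l<z\le x_c$, which the displayed statement abbreviates.)

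The first observation is that under both hypotheses the scale measure diverges at the left boundary, $S(x_l,x_0]=\infty$: this is assumed in case (ii), and it is part of the entrance classification in case (i). Consequently $S[\triangle,x_0]=\int_{\triangle}^{x_0}s(y)\,\mathrm{d}y\to\infty$ and $S[\triangle,x_c]=S[\triangle,x_0]+S[x_0,x_c]\to\infty$, so the weight $S[x_0,x_c]/S[\triangle,x_c]\to 0$. The bracket $\bigl[1-S[x_0,x_c]/S[\triangle,x_c]\bigr]$ in the first summand of \eqref{eq:1th_moment_exittime} then tends to $1$, and since that integral is independent of $\triangle$, the first summand converges to $2\int_{x_0}^{x_c}S[z,x_c]\,\mu(z)\,\mathrm{d}z$, the first term of \eqref{eq:meanexittime}. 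This term is finite precisely because $x_c$ is a regular or an exit (hence attainable) boundary, so that $\int_{x_0}^{x_c}S[z,x_c]\,\mu(z)\,\mathrm{d}z<\infty$.

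The delicate point, and the main obstacle, is the second summand, which is an indeterminate form $0\cdot\infty$: the prefactor $S[x_0,x_c]/S[\triangle,x_c]\to 0$ while $\Sigma_\triangle:=\int_{\triangle}^{x_0}S[\triangle,z]\,\mu(z)\,\mathrm{d}z\to\infty$. I would resolve it by Fubini's theorem, rewriting
\[
\Sigma_\triangle=\int_{\triangle}^{x_0}s(y)\,M[y,x_0]\,\mathrm{d}y,\qquad M[y,x_0]:=\int_{y}^{x_0}\mu(z)\,\mathrm{d}z,
\]
so that $\Sigma_\triangle/S[\triangle,x_0]$ is an $s$-weighted average of the bounded, continuous, and monotone function $M[\,\cdot\,,x_0]$. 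Because $S[\triangle,x_0]\to\infty$, the weight $s$ concentrates at $x_l$, where $M[y,x_0]\uparrow M(x_l,x_0]$. Splitting the integral at an intermediate point $x_l<\delta<x_0$, bounding $M[\,\cdot\,,x_0]\ge M[\delta,x_0]$ on $[\triangle,\delta]$, using $\int_{\triangle}^{\delta}s\big/\int_{\triangle}^{x_0}s\to1$, and finally letting $\delta\downarrow x_l$ gives $\Sigma_\triangle/S[\triangle,x_0]\to M(x_l,x_0]$, while the trivial upper bound $M[\,\cdot\,,x_0]\le M(x_l,x_0]$ closes the squeeze. Since moreover $S[\triangle,x_0]/S[\triangle,x_c]\to1$, the second summand converges to $2\,S[x_0,x_c]\,M(x_l,x_0]=2\int_{x_0}^{x_c}s(y)\,\mathrm{d}y\int_{x_l}^{x_0}\mu(z)\,\mathrm{d}z$, the second term of \eqref{eq:meanexittime}.

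It remains to confirm finiteness, i.e. $M(x_l,x_0]<\infty$. In case (ii) this is assumed directly. In case (i) it follows from the entrance conditions: writing $N_l(x_l)=\int_{x_l}^{x_0}s(y)\,M(x_l,y]\,\mathrm{d}y$ by the same Fubini rearrangement and bounding $S[z,x_0]\ge S[x',x_0]>0$ for $z\le x'$ yields $S[x',x_0]\int_{x_l}^{x'}\mu(z)\,\mathrm{d}z\le N_l(x_l)<\infty$, whence $M(x_l,x_0]<\infty$. Thus both hypotheses reduce to the single pair $S(x_l,x_0]=\infty$ and $M(x_l,x_0]<\infty$ used throughout, and one computation covers both. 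Collecting the two limits gives \eqref{eq:meanexittime}, and the finiteness of the two summands gives $M_1(x_0)<\infty$ for all $x_0\in[x_l,x_c]$.
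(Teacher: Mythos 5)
Your proposal is correct, and its overall skeleton coincides with the paper's: both take the limit $\triangle\downarrow x_l$ in \eqref{eq:1th_moment_exittime}, reduce case (i) to case (ii) by showing an entrance boundary forces $S(x_l,z]=\infty$ (and you correctly read the truncated condition in (ii) as $M(x_l,z]<\infty$), and both dispatch the first summand via $S[x_0,x_c]/S[\triangle,x_c]\to 0$. Where you genuinely diverge is at the critical $0\cdot\infty$ term $Z$ in \eqref{eq:limitSecondTerm}. The paper interchanges the limit with the integral, writing the integrand as $S[\triangle,z]/S[\triangle,x_c]=\bigl(1+S[z,x_c]/S[\triangle,z]\bigr)^{-1}\to 1$ pointwise; its stated justification (``because the drift and diffusion are measurable functions'') is too weak as written --- what actually licenses the interchange is dominated convergence with dominating function $\mu(z)$, integrable near $x_l$ precisely because $M(x_l,x_0]<\infty$. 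Your route avoids this issue entirely: Fubini converts $\Sigma_\triangle$ into a scale-weighted average of the monotone function $M[\cdot,x_0]$, and the splitting-at-$\delta$ squeeze shows the average concentrates at $x_l$, yielding $\Sigma_\triangle/S[\triangle,x_0]\to M(x_l,x_0]$ by elementary estimates. You also supply two justifications the paper leaves implicit: a monotone-coupling argument identifying $\lim_{\triangle\downarrow x_l}M_1^{\triangle}(x_0)$ with the mean passage time of the singular process (the paper simply asserts that the limit of \eqref{eq:1th_moment_exittime} is the quantity sought), and finiteness of the first summand from attainability of $x_c$, i.e. $\Sigma_r(x_c)<\infty$ for a regular or exit boundary, where the paper instead appeals loosely to $N_l(x_l)<\infty$. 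The coupling step is stated heuristically and would need a Skorokhod-reflection comparison to be fully rigorous, but it addresses a gap the paper does not acknowledge at all; on balance your argument is tighter than the published one at the key limit.
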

\begin{proof}
We have to show, that the limit as $\triangle \rightarrow x_l$ of equation \eqref{eq:1th_moment_exittime} is given by the resulting equation \eqref{eq:meanexittime} of Theorem \ref{th:meanexittime}.
For the case (i) in which the singular point $x_l$ is an entrance boundary, we first observe that due to the definition \eqref{eq:N_l} of $N_l(x_l)$ we have
\begin{equation}\label{eq:muFinite}
\lim\limits_{\triangle \downarrow x_l}{ \int_{\triangle}^{z} \mu(\eta)\,\mathrm{d}\eta <\infty},\;\forall x_l<z\leq x_c
\end{equation}
since $N_l(x_e)<\infty$. Then
\begin{equation}
\Sigma_l(x_l) + N_l(x_l) = \lim\limits_{\triangle \downarrow x_l}{ \{S[\triangle, z]\int_{\triangle}^{z} \mu(\eta)\,\mathrm{d}\eta} \}
\end{equation}
implies the limit
\begin{equation}
\lim\limits_{\triangle \downarrow x_l}{ S[\triangle, z]}=S(x_l, z]=\infty, \;\forall x_l<z\leq x_c.
\end{equation}
Thus case (i) implies case (ii). Using conditions (ii), we can calculate the limit as $\triangle\rightarrow x_l$ of the first term in equation \eqref{eq:1th_moment_exittime} by
\begin{equation}
\lim\limits_{\triangle \downarrow x_l}{2 \left [ 1-\frac{S[x_0,x_c]}{S[\triangle, x_c]}   \right]\int_{x_0}^{x_c} S[z, x_c]\,\mu(z)\,\mathrm{d}z=2\int_{x_0}^{x_c}S[z,x_c]\mu(z)dz}.
\end{equation}
The last term of equation \eqref{eq:1th_moment_exittime} involves the measure
\begin{equation}
\Sigma_l(x_l)=\lim\limits_{\triangle \downarrow x_l} \int_{\triangle}^{x_0} S[\triangle,z]\, \mu(z)\,\mathrm{d}z  =\infty.\notag
\end{equation}
Therefore, that last term will not vanish in general, although $S(x_l, x_c]=\infty$. The corresponding limit $Z$ is given by
\begin{equation}\label{eq:limitSecondTerm}
\begin{aligned}
Z:=&\lim\limits_{\triangle \downarrow x_l} 2\frac{S[x_0,x_c]}{S[\triangle, x_c]} \int_{\triangle}^{x_0} S[\triangle,z]\, \mu(z)\,\mathrm{d}z.
\end{aligned}
\end{equation}
Because the drift $m$ and the diffusion $\sigma$ are measurable functions, we can interchange the limit with the integral and obtain
\begin{equation}
\begin{aligned}
Z&=2\,S[x_0,x_c] \int_{x_l}^{x_0} \lim\limits_{\triangle  \downarrow x_l} \frac{S[\triangle,z]}{S[\triangle, x_c]}\, \mu(z)\,\mathrm{d}z\\
&=2\,S[x_0,x_c] \int_{x_l}^{x_0} \lim\limits_{\triangle \downarrow x_l}\frac{S[\triangle,z]}{S[\triangle, z]+S[z, x_c]}\, \mu(z)\,\mathrm{d}z\\
&=2\,S[x_0,x_c] \int_{x_l}^{x_0} \lim\limits_{\triangle \downarrow x_l}\frac{1}{1+\frac{S[z, x_c]}{S[\triangle,z]}}\, \mu(z)\,\mathrm{d}z.
\end{aligned}
\end{equation}
Since $S(x_e, x_c]=\infty$ it follows that
\begin{equation}
\begin{aligned}
Z=2\,S[x_0,x_c] \int_{x_l}^{x_0} \mu(z)\,\mathrm{d}z,
\end{aligned}
\end{equation}
which is the last term of equation \eqref{eq:meanexittime}.

Due to $N_l(x_l)<\infty$, the process $x$ reaches the point $x_c$ in finite time with probability one, starting at an arbitrary $x_0\in[x_l;x_c]$. In addition, the last term of equation \eqref{eq:meanexittime} is also finite, because of equation \eqref{eq:muFinite}.
These facts imply $M_1(x_0)<\infty$ $\forall\; x_0\in[x_l;x_c]$.
\end{proof}
With this result, we are able to calculate the mean first passage time of nonlinear diffusion processes.

\begin{corollary}\label{th:meanexittimeCorollary}
Let $x_l$ be an entrance boundary 
and let $x_c$ be a regular or an exit boundary of the diffusion process $x(t)$ as defined by the Itô equation \eqref{eq:Ito}. 
Then the mean time until $x(t)$ reaches $x_c$ starting
at $x_0=x_l$ is finite and given by
\begin{equation}\label{eq:meanexittimeCorollary}
M_1(x_0)=2\int_{x_0}^{x_c}S[z,x_c]\mu(z)dz.
\end{equation}
\end{corollary}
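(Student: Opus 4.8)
The plan is to treat the Corollary as the boundary case $x_0 = x_l$ of the Main Theorem. Inserting $x_0 = x_l$ into the general formula \eqref{eq:meanexittime}, the first summand becomes precisely the claimed expression $2\int_{x_l}^{x_c} S[z,x_c]\,\mu(z)\,\mathrm{d}z$ of \eqref{eq:meanexittimeCorollary}, so the entire content of the Corollary is the assertion that the second summand of \eqref{eq:meanexittime} drops out at $x_0 = x_l$. This is not immediate: since $x_l$ is an entrance boundary we have $S(x_l,x_c] = \infty$, whereas $\int_{x_l}^{x_0}\mu(z)\,\mathrm{d}z \to 0$ as $x_0 \downarrow x_l$, so the second summand $2\,S[x_0,x_c]\int_{x_l}^{x_0}\mu(z)\,\mathrm{d}z$ is of indeterminate type $\infty\cdot 0$ and must be evaluated as a genuine limit $x_0 \downarrow x_l$ rather than by naive substitution.

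First I would verify that the target integral is finite, namely $\int_{x_l}^{x_c} S[z,x_c]\,\mu(z)\,\mathrm{d}z < \infty$. Splitting $S[z,x_c] = S[z,x_0] + S[x_0,x_c]$ on $(x_l,x_0]$ decomposes the contribution near $x_l$ into $N_l(x_l)$ of \eqref{eq:N_l} plus $S[x_0,x_c]\int_{x_l}^{x_0}\mu(z)\,\mathrm{d}z$; the first is finite by the entrance condition \eqref{eq:EingangsrandbedingungKarlin}, the second is finite because $M(x_l,x_0]<\infty$ (equation \eqref{eq:muFinite}) and $x_0 > x_l$, while the remaining contribution on $[x_0,x_c]$ is finite as $x_c$ is regular or exit. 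Monotone convergence of the nonnegative integrand then shows that the first summand of \eqref{eq:meanexittime} increases to $2\int_{x_l}^{x_c} S[z,x_c]\,\mu(z)\,\mathrm{d}z$ as $x_0\downarrow x_l$.

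The crux is the vanishing of the second summand, which I would establish by domination rather than by trying to balance the competing infinities directly. Since $s\geq 0$ and $[x_0,x_c]\subseteq[z,x_c]$ for every $z\in(x_l,x_0]$, one has $S[x_0,x_c]\leq S[z,x_c]$, whence
\begin{equation}
0 \leq S[x_0,x_c]\int_{x_l}^{x_0}\mu(z)\,\mathrm{d}z = \int_{x_l}^{x_0} S[x_0,x_c]\,\mu(z)\,\mathrm{d}z \leq \int_{x_l}^{x_0} S[z,x_c]\,\mu(z)\,\mathrm{d}z. \notag
\end{equation}
The rightmost integral is the tail near $x_l$ of the convergent integral of the previous step, hence tends to $0$ as $x_0\downarrow x_l$, and the squeeze forces the second summand to $0$. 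Combining with the previous paragraph yields $M_1(x_l) = 2\int_{x_l}^{x_c} S[z,x_c]\,\mu(z)\,\mathrm{d}z$, and this value is finite; finiteness also follows directly from the bound $M_1(x_0)<\infty$ already proved in the Main Theorem.

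I expect the only real obstacle to be the indeterminate form $\infty\cdot 0$ in the second summand: one cannot simply set $\int_{x_l}^{x_l}\mu = 0$, because the prefactor $S[x_l,x_c]$ is infinite, so the argument must route the product through a dominating tail of a convergent integral. The remaining ingredients — the splitting of $S[z,x_c]$, the finiteness bookkeeping, and the monotone convergence of the first summand — are routine once the entrance-boundary facts $N_l(x_l)<\infty$ and $M(x_l,x_0]<\infty$ are in hand.
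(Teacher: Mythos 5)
Your proof is correct, and its skeleton is the same as the paper's: specialize the formula \eqref{eq:meanexittime} of Theorem \ref{th:meanexittime} and show that the second summand $2\,S[x_0,x_c]\int_{x_l}^{x_0}\mu(z)\,\mathrm{d}z$ vanishes as $x_0\downarrow x_l$. The difference is one of rigor rather than route: the paper's proof consists of the bare assertion that this limit is zero, with no justification, even though, as you correctly stress, it is an $\infty\cdot 0$ form, since $S(x_l,x_c]=\infty$ at an entrance boundary while $\int_{x_l}^{x_0}\mu(z)\,\mathrm{d}z\to 0$ by \eqref{eq:muFinite}. Your domination step --- $S[x_0,x_c]\le S[z,x_c]$ for $z\le x_0$, so the summand is squeezed by the tail $\int_{x_l}^{x_0}S[z,x_c]\,\mu(z)\,\mathrm{d}z$ of an integral whose convergence you verify from $N_l(x_l)<\infty$ together with the regular/exit condition at $x_c$ --- supplies exactly the argument the paper omits, and it yields the finiteness claim of the corollary as a by-product rather than by appeal to Theorem \ref{th:meanexittime}. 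One cosmetic point: you use $x_0$ both as the moving variable in the limit and as the splitting point in the finiteness estimate; fixing a separate $x_1\in(x_l,x_c)$ for the split would avoid the conflation, though nothing breaks as written.
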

\begin{proof}
The assertion follows from Theorem \ref{th:meanexittime} because of
\begin{equation}
\begin{aligned}
\lim\limits_{x_0 \downarrow x_l} 2\,S[x_0,x_c] \int_{x_l}^{x_0}\mu(z)dz=0.
\end{aligned}
\end{equation}
\end{proof}
We now turn to the determination of moments of the mean first passage time $T_{x_c}(x_0)$ of the process from equation \eqref{eq:Ito}.
Starting from a point $x_0\in[\triangle;x_c]$, the $n$-th moment for reaching the regular boundary $ x_c $ can be obtained from the generalized Pontryagin equation \eqref{eq:PontryaginGleichung}. This moment is given by
%
\begin{equation}\label{eq:nth_moment_exittime}
\begin{aligned}
M_n(x_0)&=2\, n \left [ 1-\frac{S[x_0,x_c]}{S[\triangle, x_c]}   \right]\int_{x_0}^{x_c} S[z, x_c]\,\mu(z)\,M_{n-1}(z)\,\mathrm{d}z\\
&\quad +2\, n \, \frac{S[x_0,x_c]}{S[\triangle, x_c]}   \int_{\triangle}^{x_0} S[\triangle,z]\, \mu(z)\,M_{n-1}(z)\,\mathrm{d}z,
\end{aligned}
\end{equation}
where $M_0(x_0):=1$, cf. \cite{karlin:1981} on page 197.\\
\\
For the singular case $\sigma^2(x_e)=0$ at the entrance boundary $x_e$, we can extend Theorem \ref{th:meanexittime} for the $n$-th moment of the first passage time  $T_{x_c}(x_0)$ as follows.
\begin{theorem}\label{satz:momentsOfFirstPassage}
Let $x_e$ be an entrance boundary 
and let $x_c$ be a regular or an exit boundary of the diffusion process $x(t)$ as defined by the Itô equation \eqref{eq:Ito}. 
Let further $M_{n}(x_0)$ be the $n$-th moment of the first passage time until $x(t)$ reaches $x_c$ starting
at any $x_0\in[x_e,x_c]$. Then
\begin{equation}\label{eq:Satz_nth_moment_exittime}
\begin{aligned}
M_n(x_0)&=2\,n\int_{x_0}^{x_c}\left[ \int^{x_c}_z s(y)\,\mathrm{d}y\right]M_{n-1}(z)\,\mu(z)\,\mathrm{d}z \\
&\hspace{2cm}+ 2\,n\,\int_{x_0}^{x_c}s(y)\, \mathrm{d}y  \int_{x_e}^{x_0}M_{n-1}(z)\mu(z)dz,
\end{aligned}
\end{equation}
where the first moment  $M_1(x_0)$ of the recursion is obtained from Theorem \ref{th:meanexittime}
and the $n$-th moment is finite, i.e. $|M_n(x_0)|<\infty$ $\forall n \in \mathds{N}$.
\end{theorem}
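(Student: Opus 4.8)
The plan is to argue by induction on $n$, mirroring the proof of Theorem \ref{th:meanexittime}. The base case $n=1$ is precisely the Main Theorem, which already delivers a finite first moment $M_1$; since the expression \eqref{eq:meanexittime} for $M_1$ is an integral of continuous integrands over the compact interval $[x_e,x_c]$, the function $M_1$ is moreover continuous and hence bounded there. For the inductive step I would assume that $M_{n-1}$ is finite, continuous, and bounded on $[x_e,x_c]$, and then pass to the limit $\triangle \downarrow x_e$ in the recursion \eqref{eq:nth_moment_exittime}.

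The two limits are computed exactly as in Theorem \ref{th:meanexittime}, the only difference being the extra factor $M_{n-1}(z)$ in each integrand. The single fact that drives both limits is that an entrance boundary at $x_e$ forces $S(x_e,z]=\infty$ for every $x_e<z\le x_c$, as established in the proof of Theorem \ref{th:meanexittime}. For the first term of \eqref{eq:nth_moment_exittime} this makes the prefactor $S[x_0,x_c]/S[\triangle,x_c]$ vanish in the limit, leaving $2n\int_{x_0}^{x_c}S[z,x_c]\,\mu(z)\,M_{n-1}(z)\,\mathrm{d}z$. For the second term I would interchange the limit and the integral, write $S[\triangle,z]/S[\triangle,x_c]=1/(1+S[z,x_c]/S[\triangle,z])$, and use $S(x_e,z]=\infty$ to reduce the pointwise limit of this ratio to $1$, obtaining $2n\,S[x_0,x_c]\int_{x_e}^{x_0}\mu(z)\,M_{n-1}(z)\,\mathrm{d}z$. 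Together these reproduce \eqref{eq:Satz_nth_moment_exittime}.

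Finiteness then propagates along the induction. The first integral is bounded by $2n\,\sup_{[x_e,x_c]}|M_{n-1}|\cdot\int_{x_0}^{x_c}S[z,x_c]\,\mu(z)\,\mathrm{d}z$, which is finite because the latter integral is exactly the first-moment quantity of Theorem \ref{th:meanexittime} for the regular or exit boundary $x_c$. The second integral is bounded by $2n\,\sup_{[x_e,x_c]}|M_{n-1}|\cdot S[x_0,x_c]\int_{x_e}^{x_0}\mu(z)\,\mathrm{d}z$, which is finite by \eqref{eq:muFinite}. Hence $M_n$ is finite, and being again an integral of continuous integrands over $[x_e,x_c]$ it is continuous and bounded, so the induction hypothesis is restored and $|M_n(x_0)|<\infty$ holds for all $n\in\N$.

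The main obstacle will be the rigorous justification of interchanging the limit $\triangle\downarrow x_e$ with the integral in the second term, now that the integrand carries the extra factor $M_{n-1}(z)$. In the first-moment proof the integrand involved only the fixed functions $s$ and $\mu$, whereas here one must control $M_{n-1}$ near the singular entrance point $x_e$, where $\mu$ may be unbounded. The key is therefore to carry uniform boundedness of $M_{n-1}$ on $[x_e,x_c]$ through the induction, so that $\sup_{[x_e,x_c]}|M_{n-1}|\cdot\mu(z)$ serves as an integrable majorant --- integrable precisely by the entrance condition \eqref{eq:muFinite} --- and dominated convergence legitimizes passing the limit under the integral sign.
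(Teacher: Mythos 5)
Your proposal is correct and follows essentially the same route as the paper: induction on $n$ with a uniform bound $C_n$ on $M_{n-1}$, passing to the limit $\triangle \downarrow x_e$ in the recursion \eqref{eq:nth_moment_exittime} exactly as in the proof of Theorem \ref{th:meanexittime}, which the paper itself invokes by analogy. Your dominated-convergence justification with majorant $\sup_{[x_e,x_c]}|M_{n-1}|\cdot\mu(z)$, integrable by \eqref{eq:muFinite}, is in fact more careful than the paper's brief appeal to measurability of $m$ and $\sigma$ when interchanging limit and integral.
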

\begin{proof}
Let $|M_{n-1}(x_0)|<\infty$. Then a constant $C_n\in\mathds{R^+}$ exists for any $n$, such that
\begin{equation}
|M_{n-1}(x_0)|\leq C_n<\infty.\notag
\end{equation}
It follows that $|M_{n}(x_0)|<\infty$ as well, since
\begin{equation}
\begin{aligned}
|M_{n}(x_0)|&=2\, n \,\bigg|\lim\limits_{\triangle \downarrow x_e}\bigg\{ \left [ 1-\frac{S[x_0,x_c]}{S[\triangle, x_c]}   \right]\int_{x_0}^{x_c} S[z, x_c]\,\mu(z)\,M_{n-1}(z)\,\mathrm{d}z\\
&\hspace{2cm}+\frac{S[x_0,x_c]}{S[\triangle, x_c]} \int_{\triangle}^{x_0} S[\triangle,z]\, \mu(z)\,M_{n-1}(z)\,\mathrm{d}z\bigg\}\bigg|\\
&\leq2\, n \,C_n\,  \bigg|\lim\limits_{\triangle \downarrow x_e}\bigg\{\left [ 1-\frac{S[x_0,x_c]}{S[\triangle, x_c]}  \right] \int_{x_0}^{x_c} S[z, x_c]\,\mu(z)\,\mathrm{d}z\\
&\hspace{2cm}+ \frac{S[x_0,x_c]}{S[\triangle, x_c]}   \int_{\triangle}^{x_0} S[\triangle,z] \,\mu(z)\,\mathrm{d}z\bigg\}\bigg|\\
&=n\,C_n\,|M_1(x_0)|<\infty.\notag
\end{aligned}
\end{equation}

Because of $M_1(x_0)<\infty$, we have by mathematical induction that\\ $M_n(x_0)<\infty$ $\forall n\in \mathds{N}$.
Since $M_n(x_0)<\infty$, it follows that the $n$-th moment of the first passage time $T_{x_c}(x_0)$ is given by equation \eqref{eq:Satz_nth_moment_exittime}. The calculations are analog to the calculations in the proof of Theorem \ref{th:meanexittime}.
\end{proof}
We can again consider the first passage time of $x_c$ starting at the entrance boundary $x_e$.
\begin{corollary}\label{th:momentsexittimeCorollary}
Let $x_e$ be an entrance boundary 
and let $x_c$ be a regular or an exit boundary of the diffusion process $x(t)$ as defined by the Itô equation \eqref{eq:Ito}. 
Then the moments of the time until $x(t)$ reaches $x_c$ starting
at $x_0=x_e$ is finite and given by
\begin{equation}\label{eq:meanexittimeCorollary}
M_{n}(x_0)=2\,n\int_{x_0}^{x_c}\left[ \int^{x_c}_z s(y)\,\mathrm{d}y\right]M_{n-1}(z)\,\mu(z)\,\mathrm{d}z.
\end{equation}
\end{corollary}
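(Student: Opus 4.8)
The plan is to derive the formula by specializing the $n$-th moment identity \eqref{eq:Satz_nth_moment_exittime} of Theorem \ref{satz:momentsOfFirstPassage} to the starting point $x_0=x_e$, which amounts to evaluating the limit $x_0\downarrow x_e$ of its two terms. The entire difficulty is concentrated in the second term, since at $x_0=x_e$ it is of the indeterminate form $\infty\times 0$: its prefactor $\int_{x_0}^{x_c}s(y)\,\mathrm{d}y=S[x_0,x_c]$ tends to $S(x_e,x_c]=\infty$ (because $x_e$ is an entrance boundary, hence $S(x_e,z]=\infty$ for all $x_e<z\le x_c$, as established in the proof of Theorem \ref{th:meanexittime}), while the inner integral $\int_{x_e}^{x_0}M_{n-1}(z)\,\mu(z)\,\mathrm{d}z$ collapses to zero as the interval of integration shrinks to the point $x_e$. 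So the statement must be read as the limit $M_n(x_e)=\lim_{x_0\downarrow x_e}M_n(x_0)$ of the generic-$x_0$ formula.

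First I would handle the first term. Since $T_{x_c}$ is a nonnegative random variable each moment satisfies $M_{n-1}(z)\ge 0$, and together with $s,\mu\ge 0$ the integrand $S[z,x_c]\,M_{n-1}(z)\,\mu(z)$ is nonnegative. As $x_0\downarrow x_e$ the lower limit decreases, so by monotone convergence
\begin{equation}
\lim_{x_0\downarrow x_e}\,2n\int_{x_0}^{x_c}\Big[\int_z^{x_c}s(y)\,\mathrm{d}y\Big]M_{n-1}(z)\,\mu(z)\,\mathrm{d}z
=2n\int_{x_e}^{x_c}\Big[\int_z^{x_c}s(y)\,\mathrm{d}y\Big]M_{n-1}(z)\,\mu(z)\,\mathrm{d}z,
\end{equation}
which is exactly the right-hand side of the claimed identity \eqref{eq:meanexittimeCorollary}; its finiteness is already guaranteed by Theorem \ref{satz:momentsOfFirstPassage}, which asserts $M_n(x_e)<\infty$.

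The main step is to show that the second term vanishes in the limit. I would invoke the uniform bound $|M_{n-1}(z)|\le C_n<\infty$ used in the proof of Theorem \ref{satz:momentsOfFirstPassage} to dominate
\begin{equation}
\Big|\,2n\int_{x_0}^{x_c}s(y)\,\mathrm{d}y\int_{x_e}^{x_0}M_{n-1}(z)\,\mu(z)\,\mathrm{d}z\Big|
\le 2n\,C_n\,S[x_0,x_c]\int_{x_e}^{x_0}\mu(z)\,\mathrm{d}z .
\end{equation}
The remaining factor $S[x_0,x_c]\int_{x_e}^{x_0}\mu(z)\,\mathrm{d}z$ is precisely the quantity whose limit as $x_0\downarrow x_e$ was shown to be zero in Corollary \ref{th:meanexittimeCorollary}. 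Hence the second term tends to zero, and combining this with the limit of the first term yields \eqref{eq:meanexittimeCorollary}.

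I expect the only genuine obstacle to be the $\infty\times 0$ cancellation in the second term; once one recognizes that the constant bound on $M_{n-1}$ reduces it to the mean-case estimate of Corollary \ref{th:meanexittimeCorollary}, the argument closes immediately. A minor point to verify is that $\mu$ is integrable near $x_e$, so that $\int_{x_e}^{x_0}\mu(z)\,\mathrm{d}z$ is well defined and tends to $0$; this is secured by \eqref{eq:muFinite}, which in turn follows from $N_l(x_e)<\infty$ for the entrance boundary.
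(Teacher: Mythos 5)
Your proposal is correct and takes essentially the same route as the paper: the paper's own proof likewise specializes the $n$-th moment formula of Theorem~\ref{satz:momentsOfFirstPassage} to the limit $x_0\downarrow x_e$ and observes that $\lim_{x_0\downarrow x_e}2\,n\,S[x_0,x_c]\int_{x_e}^{x_0}M_{n-1}(z)\,\mu(z)\,\mathrm{d}z=0$ while finiteness comes from $M_n(x_0)<\infty$. Your write-up merely makes explicit what the paper leaves implicit, namely the uniform bound $|M_{n-1}|\le C_n$ that reduces this $\infty\times 0$ limit to the mean case of Corollary~\ref{th:meanexittimeCorollary} and the monotone-convergence step for the first term.
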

\begin{proof}
The assertion follows from Theorem \ref{satz:momentsOfFirstPassage}, since $M_{n}(x_0)<\infty\; \forall n \in \N$,  and
\begin{equation}
\begin{aligned}
\lim\limits_{x_0 \downarrow x_e} 2\,n\,S[x_0,x_c] \int_{x_e}^{x_0}M_{n-1}(z)\mu(z)dz=0.
\end{aligned}
\end{equation}
\end{proof}

\section{Stochastic averaging}
Important systems can be modeled by 
the perturbed Hamiltonian system 
 \begin{equation}\label{eq:hamiltonian}
 \begin{aligned}
 \frac{\mathrm{d}}{\mathrm{d} t}x&=\frac{\partial H(x,y)}{\partial y},\\
 \frac{\mathrm{d}}{\mathrm{d} t}y&= -\frac{\partial H(x,y)}{\partial x}-\varepsilon d(y) +\sqrt{\varepsilon}f(x,\boldsymbol{\xi}_t)
 \end{aligned}
 \end{equation}
 with the two dimensional state space $\mathbf{Z}^{\varepsilon}:=(x,y)\in D\subset \R^2$. 
Thereby the function $H(x,y)$ is the Hamiltonian, $d(y)$ is a damping function, $f(x,\boldsymbol{\xi}_t)$ is a function of random excitations and $\varepsilon > 0$.
 For the case of weakly perturbed systems of type \eqref{eq:hamiltonian} with small $\varepsilon \ll 1$, a stochastic averaging method is proposed in the following Theorem, using results by \cite{Khasminskii:1968}, \cite{borodin:1977} and \cite{FreidlinBorodin:1995}.
 With this method, the stochastic process of the Hamiltonian $H$ can be obtained, which is the process of total energy of the corresponding nonlinear oscillator.
 Such a stochastic averaging procedure was used in \cite{dostal:2012} and can be generalized as follows.
 \begin{theorem}\label{eq:satzDostalX}
 	Let $Z^{\varepsilon}=(X ,Y)\in D\subseteq \R^2$ be the solution of the SDE
 	\begin{eqnarray}
 	\frac{\mathrm{d}X }{\mathrm{d}t}&=&\varepsilon f_1(X,Y)+\sqrt{\varepsilon} \,\mathrm{\mathbf{f_0}}(X,Y)\:\boldsymbol{\xi},\label{eq:satzDostalX_gl_1}\\
 	\frac{\mathrm{d}Y}{\mathrm{d}t}&=&g(X,Y), \hspace{8mm} (X(0),Y(0))=(x_0,y_0) \in D,\;\;\; \varepsilon> 0,
 	\end{eqnarray}
 	and let the following conditions be fulfilled:
 	\begin{itemize}[i)]
 		\item
 		The stochastic process $\boldsymbol{\xi}_t:=\boldsymbol{\xi}(\omega,t)=(\xi_1(t),\ldots,\xi_k(t))^{\mathrm{T}}\in\mathds{R}^k$ is stationary, absolutely regular with sufficient mixing properties, and with $E\{\xi_j(t)\}=0$.
 		\item  The functions $\mathbf{\mathrm{\mathbf{f_0}}}$ and $f_1$ satisfy certain limits in order to ensure uniqueness of the solution.
 		\item Without loss of generality, the functions $\mathbf{\mathrm{\mathbf{f_0}}}:D\mapsto\mathds{R}^k$, $f_1:D\mapsto\mathds{R}$ and the solution of the equation ${\dot{Y}=g(x,Y)}$ are periodic with period $ T(x) $ for fixed $ x $.
 	\end{itemize}
 	Let the averaging operator  $\mathbb{M}$ for periodic functions $f:\mathds{R^+}\mapsto\mathds{R}$ with period $T$ be defined by
 	\begin{equation}\label{eq:Mittelungsopperator}
 	\begin{aligned}
 	\mathbb{M}\left\{f\right\}=&\frac{1}{T}\int_0^Tf(t)\mathrm{d}t
 	\end{aligned}
 	\end{equation}
 	and let $Y^{x}$ be the solution of the ordinary differential equation
 	\begin{align*}
 	\mathrm{d}Y^{x}=&g(X ,Y^{x})\:\mathrm{d}t,\:X =x,\;\;Y^{x}(0)=y.
 	\end{align*}
 	If the limits
 	\begin{align*}
 	m(Z)=&\mathbb{M}\bigg\{f_1(x,Y^x(t))\\
 	&+\int_{-\infty}^0 \mathrm{cov}\left(
 	\left\{\frac{\partial \mathbf{\mathrm{\mathbf{f_0}}}(X ,Y^x(t))\:\boldsymbol{\xi}_t}{\partial X }\right\}_{X =x},\mathbf{\mathrm{\mathbf{f_0}}}(x,Y^x(t+s))\:\boldsymbol{\xi}_{t+s}
 	\right)\mathrm{d}s\bigg\}, \notag \\
 	\sigma^2(Z)=&\mathbb{M}\bigg\{\int_{-\infty}^{\infty}\mathrm{cov}\left(\mathrm{\mathbf{f_0}}(x,Y^x(t))\:\boldsymbol{\xi}_{t}, \mathbf{\mathrm{\mathbf{f_0}}}(x,Y^x(t+s))\:\boldsymbol{\xi}_{t+s}\right)\mathrm{d}s\bigg\}
 	\end{align*}
 	exist, then the process $X(\tau)$, $\tau=\varepsilon t$, converges, as $\varepsilon \rightarrow
 	0$, weakly on the time interval of order
 	$\mathcal{O}(1/\varepsilon)$ to a diffusion Markov process $Z$
 	satisfying the It\^{o} stochastic differential equation
 	\begin{equation}\label{eq:averageItoGeneral}
 	\mathrm{d}Z(\tau)=m(Z)\mathrm{d}\tau+\sigma(Z)\mathrm{d}W_{\tau},\hspace{8mm} Z(0)=x_0,
 	\end{equation}
 	with the standard Wiener process $W_{\tau}$.
 \end{theorem}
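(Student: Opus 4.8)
The plan is to prove weak convergence by showing that every limit point of the rescaled slow process solves a well-posed martingale problem whose generator is the one associated with \eqref{eq:averageItoGeneral}. I would follow the perturbed test function (corrector) method of Khasminskii, as developed for mixing random perturbations by \cite{borodin:1977} and \cite{FreidlinBorodin:1995}. First I would pass to the slow time $\tau=\varepsilon t$, in which the slow variable obeys
\begin{equation}
\frac{\mathrm{d}X}{\mathrm{d}\tau}=f_1(X,Y)+\frac{1}{\sqrt{\varepsilon}}\,\mathbf{f_0}(X,Y)\,\boldsymbol{\xi}_{\tau/\varepsilon},\notag
\end{equation}
while $Y$ traverses the frozen periodic orbit $Y^x$ of period $T(x)$ on the fast $\mathcal{O}(1/\varepsilon)$ scale. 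The two small effects --- the rapidly oscillating deterministic phase and the rapidly decorrelating noise $\boldsymbol{\xi}_{\tau/\varepsilon}$ --- must both be averaged out.

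The core step is the generator computation for a test function $\phi\in C^\infty$ depending only on $x$. Applying the (non-Markovian) evolution to $\phi(X)$ produces a singular term of order $\varepsilon^{-1/2}$ coming from $\phi'(x)\,\mathbf{f_0}\,\boldsymbol{\xi}$, an $\mathcal{O}(1)$ term from $\phi'(x)f_1$, and the diffusive contribution. To remove the singular term I would introduce a perturbed test function $\phi^{\varepsilon}=\phi+\sqrt{\varepsilon}\,\phi_1$, where the corrector $\phi_1$ is chosen as a time integral of the fluctuating field along the frozen dynamics so that the $\varepsilon^{-1/2}$ contributions cancel in expectation. This is precisely where the structure of the stated limits for $m(Z)$ and $\sigma^2(Z)$ enters.

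Carrying out the averaging, I would use the ergodicity of the periodic flow $Y^x$ to replace time averages over a fast period by the operator $\mathbb{M}$, and the stationarity, zero mean, and absolute regularity (mixing) of $\boldsymbol{\xi}$ to evaluate the resulting correlation integrals. The causal (one-sided) integral $\int_{-\infty}^{0}$ of the covariance of $\partial_X(\mathbf{f_0}\boldsymbol{\xi})$ against $\mathbf{f_0}\boldsymbol{\xi}$ generates the noise-induced (Stratonovich-type) correction in $m(Z)$, while the two-sided integral $\int_{-\infty}^{\infty}$ assembles $\sigma^2(Z)$; both converge by the mixing hypothesis. The limiting generator is thus $\mathcal{L}\phi=m(z)\phi'(z)+\tfrac12\sigma^2(z)\phi''(z)$. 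Combining this with tightness of the laws of $X(\cdot)$ on Skorokhod space --- obtained from the boundedness of $\mathbf{f_0}$, $f_1$ and the martingale bound --- every subsequential limit solves the martingale problem for $\mathcal{L}$; well-posedness of this one-dimensional diffusion problem under the regularity of $m$ and $\sigma^2$ then upgrades subsequential convergence to weak convergence of the whole family, i.e.\ to the solution of \eqref{eq:averageItoGeneral}.

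The main obstacle is the simultaneous control of the two fast effects: justifying that the true fast variable may be frozen at its value $x$ over each period (so that $Y^x$ is a legitimate surrogate and the operator $\mathbb{M}$ applies), while quantifying the decay of correlations of $\boldsymbol{\xi}$ strongly enough that the corrector integrals are finite and the neglected remainder is genuinely $o(1)$ uniformly in $\tau$ on intervals of length $\mathcal{O}(1/\varepsilon)$. These two estimates interact through the period $T(x)$, and making them uniform in $x$ on compact subsets of $D$ is the technically delicate part that the mixing and periodicity hypotheses (conditions i)--iii)) are designed to supply.
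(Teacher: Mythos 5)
Your plan is sound, but it reconstructs machinery that the paper deliberately does not redo. The paper's entire proof is three sentences: solve $\mathrm{d}Y^{x}=g(x,Y^{x})\,\mathrm{d}t$ for arbitrary but fixed $x$, substitute $Y^{x}$ for $Y$ in the slow equation \eqref{eq:satzDostalX_gl_1}, and apply the limit theorem of \cite{borodin:1977} to the resulting scalar equation with random right-hand side. Everything you sketch --- the $\tau=\varepsilon t$ rescaling, the perturbed test function $\phi^{\varepsilon}=\phi+\sqrt{\varepsilon}\,\phi_1$ cancelling the $\varepsilon^{-1/2}$ term, the one-sided covariance integral producing the drift correction and the two-sided integral producing $\sigma^2$, tightness plus well-posedness of the martingale problem --- is precisely the interior of the cited Borodin/Khasminskii result, so you are proving the black box rather than invoking it. What your route buys is transparency: it makes explicit where each hypothesis enters (mixing for finiteness of the corrector integrals, periodicity of $Y^x$ for replacing time averages by $\mathbb{M}$), which the paper's citation-style proof hides; it is also where one sees that the drift formula's causal integral is genuinely a noise-induced correction and not an artifact. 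What the paper's route buys is that the genuinely delicate issue you correctly flag at the end --- justifying the frozen-coordinate substitution uniformly in $x$ on compacts, simultaneously with the decorrelation of $\boldsymbol{\xi}$, over times of order $\mathcal{O}(1/\varepsilon)$ --- is exactly the content of Borodin's theorem and need not be re-established. Two small cautions: it is the \emph{slow} variable $X$ that is frozen at $x$ (the fast variable $Y$ is replaced by the frozen-flow solution $Y^x$), not the fast variable as you phrase it; and your tightness argument appeals to boundedness of $\mathbf{f_0}$ and $f_1$, which the theorem's (admittedly vague) condition ii) does not literally state, so in a complete write-up you would need to import the precise growth and mixing hypotheses from \cite{borodin:1977} rather than the placeholder conditions given here.
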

 \begin{proof}
 	The deterministic solution $Y^{x}$ of equation $\mathrm{d}Y^{x}=g(x,Y^{x})\:\mathrm{d}t$ is determined for arbitrary but fixed $x$. Then, $Y$ is replaced by $Y^{x}$ in equation \eqref{eq:satzDostalX_gl_1}. The assertion follows by applying the Theorem from \cite{borodin:1977} for the resulting equation.
 \end{proof}
 If the functions $\mathbf{\mathrm{\mathbf{f_0}}}$ and $f_1$ are not periodic, then the procedure as described in \cite{Khasminskii:1966} has to be used.
 The essential result of Theorem \ref{eq:satzDostalX} is, that the total energy $H(\textbf{Z}^{\varepsilon}(t))$ of system \eqref{eq:hamiltonian} converges in probability at a scale $\mathcal{O}( t )$, $\tau=\varepsilon t $, to the diffusion Markov process $\bar{H}(t)$ as $\varepsilon \rightarrow 0$. The resulting stochastic process is given by the It\^{o} equation
 \begin{equation}\label{eq:Itoaveraged}
 \mathrm{d}\bar{H}(\tau)=m(\bar{H})\mathrm{d}\tau+\sigma(\bar{H})\mathrm{d}W_{\tau},
 \end{equation}
 where $W_{\tau}$ is the standard Wiener process. In order to simplify the notation, we will not distinguish between the original process $H$ and the averaged process $\bar{H}$.

\begin{remark}
In this work the function $g(X;Y)$ contained in Theorem~\ref{eq:satzDostalX} is chosen such that $g(X ,Y)=\sqrt{Q}$ with $Q:=2X - 2U(Y)$, whereby \mbox{$U(Y):\mathds{R}\mapsto\mathds{R}$} is a continuously differentiable function. Depending on the roots of $g(X,Y)$, the drift and diffusion coefficients have to be determined piecewise for different phase space regions. More details on the dependence of the nonlinear diffusion process on $g(X,Y)$ can be found in \cite{FreidlinWentzell:2012}.	
\end{remark}

\section{Linear system with external excitation}\label{sec:example}
In this section the mean and variance of first passage times is determined for amplitude crossing of a linear oscillator, which is subjected  to an additive stationary wide-band random excitation.
In the following sections the equations of motion of this oscillator are successively extended in order to demonstrate the developed theory.
The considered linear oscillator, as also studied in \cite{ariaratnam:1973}, is given by
\begin{equation}\label{eq:linear_oscillator_ariaratnam}
\ddot{x}+ \varepsilon 2 d \omega_n \dot{x} + \omega_n^2 x = \sqrt{\varepsilon} \xi(t).
\end{equation}
Here, the parameter $d$ is the damping ratio, $\omega_n$ is the undamped natural frequency, and $\xi(t)$ is a stationary stochastic process with zero mean and spectral density
\begin{equation}\label{eq:specDensity}
S_{\xi\xi}(\omega)=\frac{1}{2 \pi} \int_{-\infty}^{\infty}R_{\xi\xi}(s)\exp(\mathrm{i}\omega s)\mathrm{d}s,
\end{equation}
where $R_{\xi\xi}(s)=E[\xi(t)\xi(t+s)]$ is the autocorrelation of the stochastic process $\xi(t)$, and $E[\cdot]$ denotes the expected value. We introduce the non-dimensional variable
\begin{equation}
r(t)=\frac{a(t)}{\sqrt{\frac{1}{2}d \omega_n^3 S_{\xi\xi}(\omega_n)}},
\end{equation}
where $a(t)$ is the amplitude process of the oscillator \eqref{eq:linear_oscillator_ariaratnam}. Then according to \cite{ariaratnam:1973}, the Itô equation for the process $r(t)$ can be written as
\begin{equation}\label{eq:linear_oscillator_ariaratnam_r}
dr=  \varepsilon d \omega_n \left(\frac{1}{2 r}-r \right) dt +  \sqrt{\varepsilon d\omega_n}dW.
\end{equation}
Now the point $r=0$ is an \emph{entrance} boundary, since
\begin{equation}\label{eq:measures_r}
\begin{aligned}
S(0,x]=\lim\limits_{\triangle \downarrow 0} Ei(x^2)-Ei(\triangle^2)=\infty\;\mathrm{implying}\; \Sigma_l(0)=\infty,
\end{aligned}
\end{equation}
and
\begin{equation}\label{eq:measures_r2}
\begin{aligned}
N_l(0)=Ei(x^2)-\ln(x^2-\gamma)  <\infty\;\forall\, x\in (0,R), \,R>0.
\end{aligned}
\end{equation}
In equations \eqref{eq:measures_r} and  \eqref{eq:measures_r2}, the function $Ei(\cdot)$ represents the exponential integral and $\gamma$ is Euler's constant.
We now use the Theorems 1 and 2 in order to calculate the first two moments $M_1(r_0,R)$ and $M_2(r_0,R)$  of the first passage time until the process $r(t)$ reaches the value $R >0$ starting at $r_0\in[0,R]$. From these results the mean $M(r_0,R)=M_1(r_0,R)$ and variance $V(r_0,R)=M_2(r_0,R)-(M_1(r_0,R))^2$ of the considered first passage time are obtained.
In Figure \ref{fig:exitTimeWhiteComparison} the mean first passage
time of the linear oscillator with additive noise \cite{ariaratnam:1973} for reaching the boundary $R=2.2$ starting at $r_0$ is shown using Theorem 1 and Corollary 1 as well as the solution obtained by \cite{ariaratnam:1973}.
\begin{figure}[htbp]
\begin{center}
\includegraphics[width=11cm]{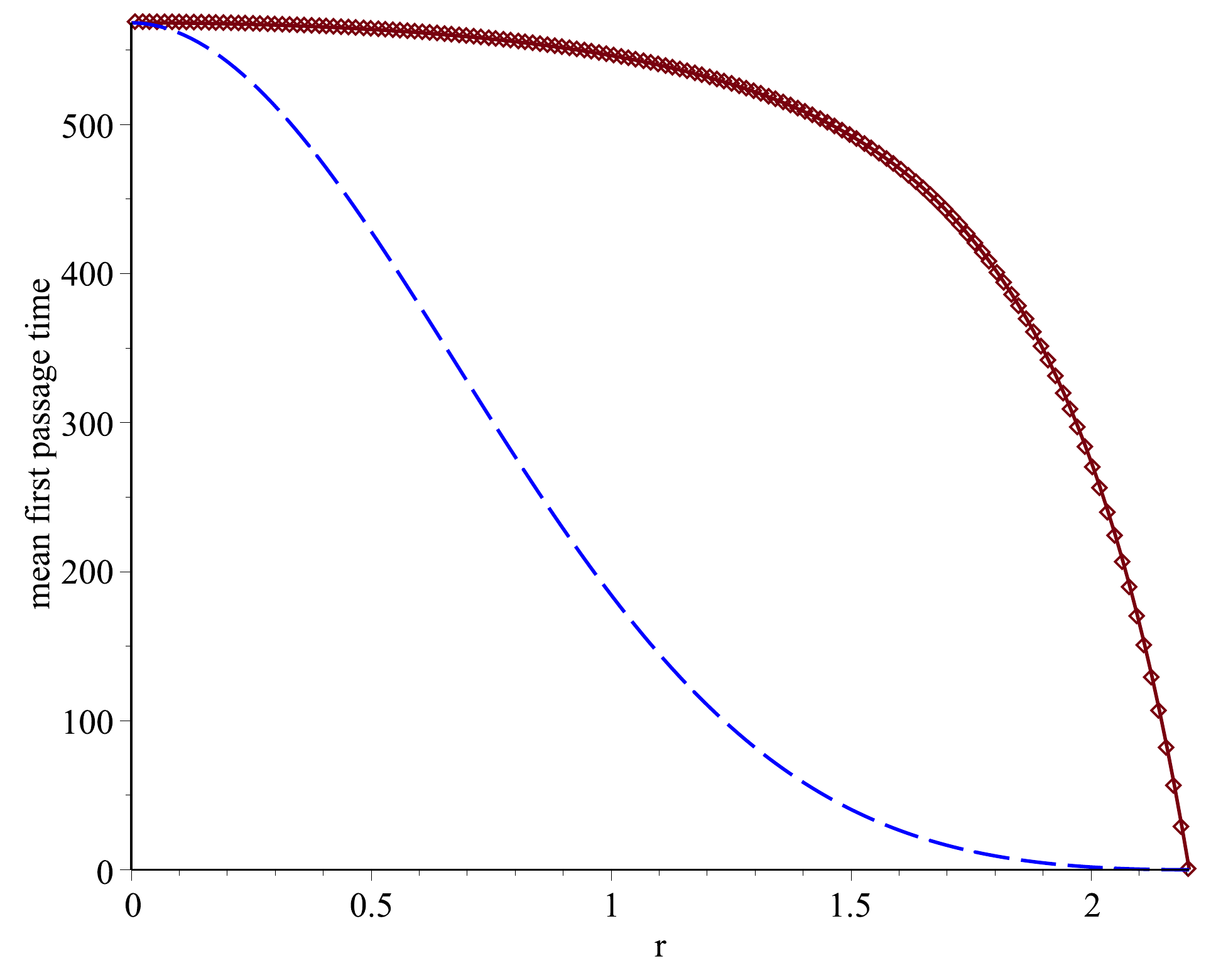}
\caption{Comparison of mean first passage times. Solution from Theorem~1~(\textcolor{red}{$\diamond$}), solution from Corollary~1(\textcolor{blue}{--  --}), solution from~\cite{ariaratnam:1973}~(\textcolor{red}{-}).}\label{fig:exitTimeWhiteComparison}
\end{center}
\end{figure}
The corresponding variance of the first passage time obtained from Theorem 2 and from the solution provided by \cite{ariaratnam:1973} is shown in Figure \ref{fig:VarianceexitTimeWhiteComparison}.
\begin{figure}[htbp]
\begin{center}
\includegraphics[width=11cm]{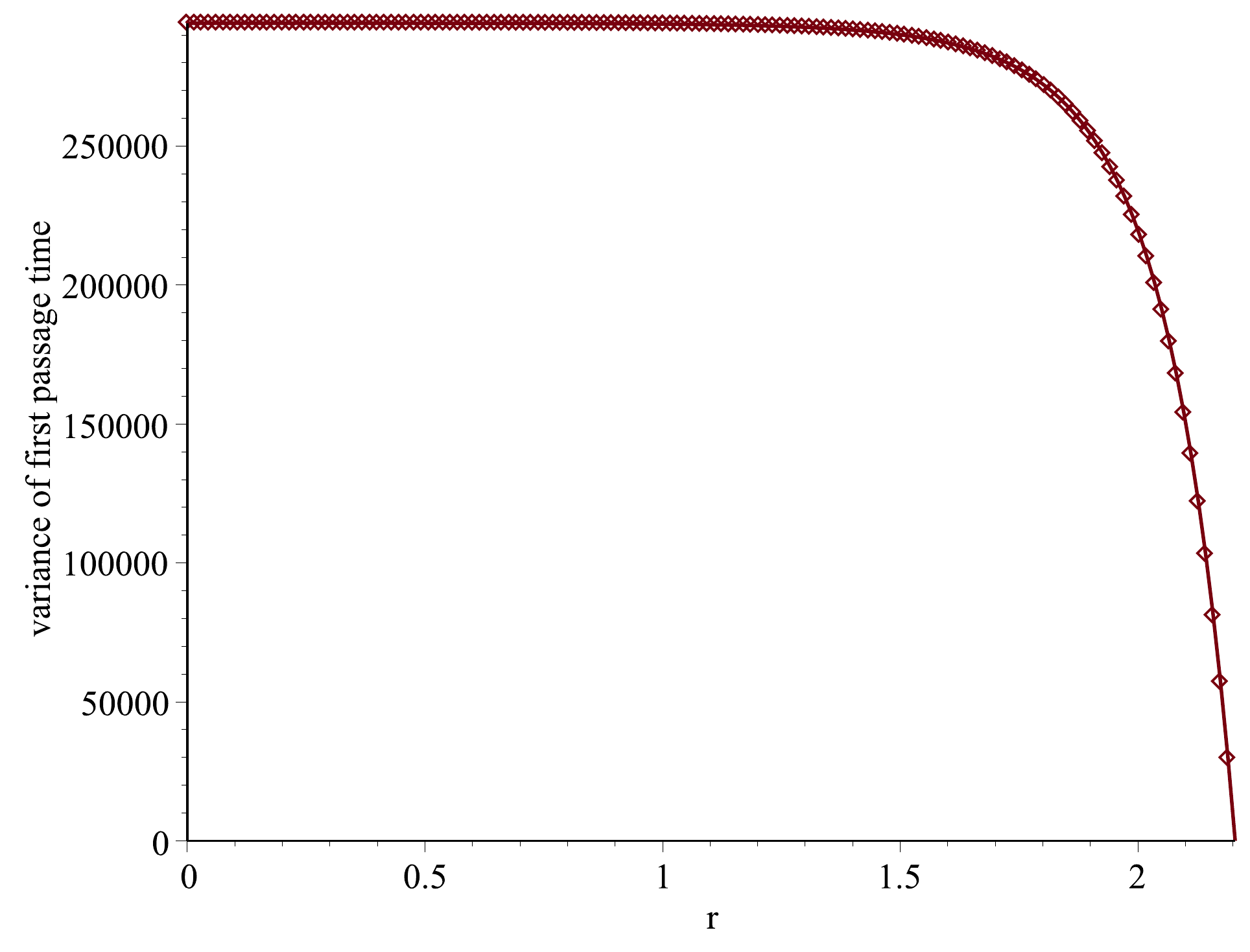}
\caption{Comparison of variance of first passage times. Solution from Theorem~2~(\textcolor{red}{$\diamond$}), solution from~\cite{ariaratnam:1973}~(\textcolor{red}{-}).}\label{fig:VarianceexitTimeWhiteComparison}
\end{center}
\end{figure}
The results show, that Theorems 1 and 2 provide exact moments for the first passage time of the oscillator \ref{eq:linear_oscillator_ariaratnam}.
\section{Forced and damped Mathieu oscillator}

The forced and damped Mathieu oscillator is an extension of the above linear oscillator by parametric forcing. It is a basic model for many physical problems. 
The state space representation of the Mathieu oscillator with additive forcing $\xi_1(t)$ and parametric forcing $\xi_2(t)$ can be written as
\begin{equation}\label{eq:Mathieu}
\begin{aligned}
\frac{\mathrm{d}}{\mathrm{d} t} x&=y,\\
\frac{\mathrm{d}}{\mathrm{d} t}y&=-\alpha_1 \,x -\varepsilon\,\beta_1\, y  + \sqrt{\varepsilon}(\nu_1\,\xi_1(t) +  \nu_2\, x \,\xi_2(t)).
\end{aligned}
\end{equation}
Where $t$ is time,

Rewriting equation \eqref{eq:Mathieu} in terms of the total energy using the Hamilton function
\begin{equation}\label{eq:HamiltonianxyMathieu}
H(x,y):=\frac{y^2}{2}+\alpha_1\frac{x^2}{2},
\end{equation}
and its time derivative
\begin{equation}\label{eq:derivativeHamiltonianMathieu}
\begin{aligned}
\frac{\mathrm{d}}{\mathrm{d} t} H=-\varepsilon \,\beta_1\,y^2+\sqrt{\varepsilon}\, y\, (\nu_1\,\xi_1(t) +  \nu_2\, x\, \xi_2(t))
\end{aligned}
\end{equation}

Combining the first equation of \eqref{eq:Mathieu} with equation \eqref{eq:derivativeHamiltonianMathieu}  and rearrange equation \eqref{eq:HamiltonianxyMathieu} 
with respect to $y$ such that
\begin{equation}
\begin{aligned}
Q(x,H):=y^2=2H-\alpha  x^2, 
\end{aligned}
\end{equation}
we get the system
\begin{equation}\label{eq:averagingsystemMathieu}
\begin{aligned}
\frac{\mathrm{d}}{\mathrm{d} t} x&=\sqrt{Q(x,H)},\\
\frac{\mathrm{d}}{\mathrm{d} t} H&=-\varepsilon \,Q(x,H)\,\beta+\sqrt{\varepsilon}\, \sqrt{Q(x,H)} \,(\nu_1\,\xi_1(t) +  \nu_2\, x\,\xi_2(t)).
\end{aligned}
\end{equation}
An important property of equation \eqref{eq:averagingsystemMathieu} is,
that the energy level $H$ changes slowly compared to the
oscillations of the variable $x$. This enables the application of
stochastic averaging to this system.

\subsection{Mathieu oscillator excited by non-white Gaussian processes}\label{subsec:colloredNoiseLinear}
The solution of the unforced oscillator~\eqref{eq:Mathieu} with  $\varepsilon = 0$, energy level $H$, and initial conditions $x(0)=0$ and $y(0)=\sqrt{2\, H}$ is given by
\begin{equation}\label{eq:xsin}
\begin{aligned}
x(t)=b\,\sin(q\,t),\;\;\;\;\;\;\;x(t+s)=b\;\sin(q\,t +q\,s).
\end{aligned}
\end{equation}
\begin{equation}\label{eq:ycos}
\begin{aligned}
&y(t)=\sqrt{Q(x(t),H)}=b\,q\,\cos(q\,t),\\
&y(t+s)=\sqrt{Q(x(t+s),H)}=b\,q\,\cos(q\,t +q\,s).
\end{aligned}
\end{equation}
Here, $b=\sqrt{\frac{2\,H}{\alpha_1}}$ is the oscillation amplitude and $ q =\sqrt{\alpha_1} $ is the natural frequency of the linear oscillator. For every energy level $ H $ the oscillation period is given by
\begin{equation}
T=\frac{2\,\pi}{\sqrt{\alpha_1}}.
\end{equation}
Stochastic averaging of system \eqref{eq:Mathieu} by means of theorem~\ref{eq:satzDostalX} yields the convergence of the energy $ H $ for $\varepsilon \rightarrow
0$ to the solution of the one-dimensional It\^{o} equation
\begin{equation}\label{eq:onedimItolinear}
\mathrm{d}H=m(H)\,\mathrm{d}\tau+\sigma(H)\, \mathrm{d}W_{\tau},
\end{equation}
whereby $W_{\tau}$ is the standard Wiener process and ${\tau}=\varepsilon t$. 
The drift coefficient $m(H)$ in equation \eqref{eq:onedimItolinear} is given by
\begin{equation}\label{eq:averagedmLinear2}
\begin{aligned}
m(H)&=-\beta_1\, H+\frac{1}{T}\int_{-\infty}^0 \Big\{R_{\xi_1\xi_1}(s) \,\frac{\pi\,\nu_1^2}{q}\cos(q\,s)\\
&\quad+R_{\xi_2\xi_2}(s) \,\frac{\pi \,b^2\,\nu_2^2}{q}\left( \cos^2(q\,s)- \sin^2(q\,s) \right) \Big\}\,\mathrm{d}s.
\end{aligned}
\end{equation}
Using the trigonometric identity
\begin{equation}
\cos^2(q\,s)-\sin^2(q\,s)=\cos(2\,q\,s) \notag
\end{equation}
And the spectral density $S_{\xi_i\xi_i}(\omega)=\frac{1}{\pi}\int_{-\infty}^0 R_{\xi_i\xi_i}(s)\cos(\omega \,s)$, we finally obtain
\begin{equation}\label{eq:averagedmLinear3}
\begin{aligned}
m(H)=-\beta_1\, H+ \pi\,\nu_1^2\, S_{\xi_1\xi_1}(\sqrt{\alpha_1})+\frac{\pi\, \nu_2^2\, H}{\alpha_1} S_{\xi_2\xi_2}(2\,\sqrt{\alpha_1}) .
\end{aligned}
\end{equation}
For the diffusion coefficient $\sigma(H)$ of the equation \eqref{eq:onedimItolinear} we get
\begin{equation}\label{eq:averagedsigmaLinear}
\begin{aligned}
\sigma^2(H)&=\frac{b^2\,q^2}{T}\int_{-\infty}^\infty  \Big\{R_{\xi_1\xi_1}(s)\,\nu_1^2\int_{0}^{T}\cos(q\,t+q\,s)\cos(q\,t)\,\mathrm{d}t\\
&\quad+R_{\xi_2\xi_2}(s)\,b^2\,\nu_2^2\int_{0}^{T}\sin(q\,t+q\,s)\sin(q\,t)\cos(q\,t+q\,s)\cos(q\,t)\,\mathrm{d}t\Big\} \,\mathrm{d}s.
\end{aligned}
\end{equation}
After evaluation of these integrals we obtain
\begin{equation}\label{eq:averagedsigmaLinear3}
\sigma^2 (H) = 2\,\pi \, \nu_1^2\, S_{\xi_1\xi_1}(\sqrt{\alpha_1})\, H+\frac{ \pi \,\nu_2^2 }{\alpha_1}\, S_{\xi_2\xi_2}(2\,\sqrt{\alpha_1})\,H^2.
\end{equation}

\subsection{Validation of the results for the Mathieu oscillator}
The results of the stochastic averaging of the Mathieu oscillator according to theorem \ref{eq:satzDostalX} are validated using the well-known solution of the classical stochastic averaging of the stochastic linear oscillator is used. This was determined by Ariaratnam and Tam in \cite{ariaratnam:1976}. The drift and diffusion coefficients $m_{A}$ and $\sigma_{A}$ are  functions of the oscillation amplitude $ b $.
\begin{equation}\label{eq:averagedmLinearAriaratnam}
m_{A}(b)=-\frac{\beta_1}{2} \,b+ \frac{\pi}{2\,\alpha_1}\, \nu_1^2\, S_{\xi_1\xi_1}(\sqrt{\alpha_1})\,\frac{1}{b}+\frac{3\,\pi }{8\,\alpha_1}\, \nu_2^2 \,S_{\xi_2\xi_2}(2\,\sqrt{\alpha_1})\,b ,
\end{equation}
\begin{equation}\label{eq:averagedsigmaLinearAriaratnam}
\sigma_{A}^2 (b) =\frac{ \pi}{\alpha_1}\,\nu_1^2\, S_{\xi_1\xi_1}(\sqrt{\alpha_1}) +\frac{ \pi }{4\,\alpha_1}\, \nu_2^2 \,S_{\xi_2\xi_2}(2\,\sqrt{\alpha_1})\,b^2
\end{equation}

Since the drift and diffusion coefficients according to \cite{ariaratnam:1976} are not functions of the energy $ H $ but functions of the oscillation amplitude $ b $, the probability densities of the oscillation amplitude $ b $ are calculated in order to compare the results. The stationary solution \eqref{eq:statDensity} of the Fokker-Planck equation \eqref{eq:FPKeq} using the drift and diffusion coefficients from \eqref{eq:averagedmLinearAriaratnam} and \eqref{eq:averagedsigmaLinearAriaratnam} is given by
\begin{equation}\label{eq:statDensityAriaratnam}
p_{A}(b)=\frac{\tilde{C}}{\sigma_{A}^2(b)}\exp\!\left(2\int_0^b\frac{m_{A}(\zeta)}{\sigma_{A}^2(\zeta)}\,\mathrm{d}\zeta \right),
\end{equation}
where $ \tilde{C} $ is a normalization constant.
For the stationary probability density of the stochastic linear oscillator using the drift and diffusion coefficients \eqref{eq:averagedmLinear3} and \eqref{eq:averagedsigmaLinear3} from the stochastic averaging of the energy by theorem \ref{eq:satzDostalX} we obtain
\begin{equation}\label{eq:statDensityLinear3}
p_{\mathrm{Lin}}(H)=\frac{C}{\sigma^2(H)}\exp\!\left(2\int_0^H\frac{m(\zeta)}{\sigma^2(\zeta)}\,\mathrm{d}\zeta \right),
\end{equation}
with a normalization constant $C$.

Using the transformation
$p_{\mathrm{st}}(b)=p_{\mathrm{st}}(H)\left(\frac{\mathrm{d}}{\mathrm{d}H}b\right)^{-1}$, the probability density $p_{\mathrm{Lin}}(H)$ can be expressed a s a function of the oscillation amplitude $b$ by

\begin{equation}\label{eq:statDensitybLinear}
p_{\mathrm{Lin}}(b)=p_{\mathrm{Lin}}(H)\, \sqrt{2\,\alpha_1 H}.
\end{equation}

As can be seen from Figure~\ref{fig:linearMittelungsmethodenVergleich}, the two probability density functions $p_{\mathrm{Lin}}(b)$ und $p_{A}(b)$ are identical for the chosen parameters.
Thus, it is obvious that the results of the two stochastic averaging methods for the stochastic linear oscillator coincide. The correspondence of these averaging methods can generally be shown by transforming the stochastic differential equation for the energy $ H $ \eqref{eq:onedimItolinear} into an equivalent stochastic differential equation in terms of the oscillation amplitude $ b $.
\begin{figure}[htbp]
	\begin{center}
		\includegraphics[width=12.0cm]{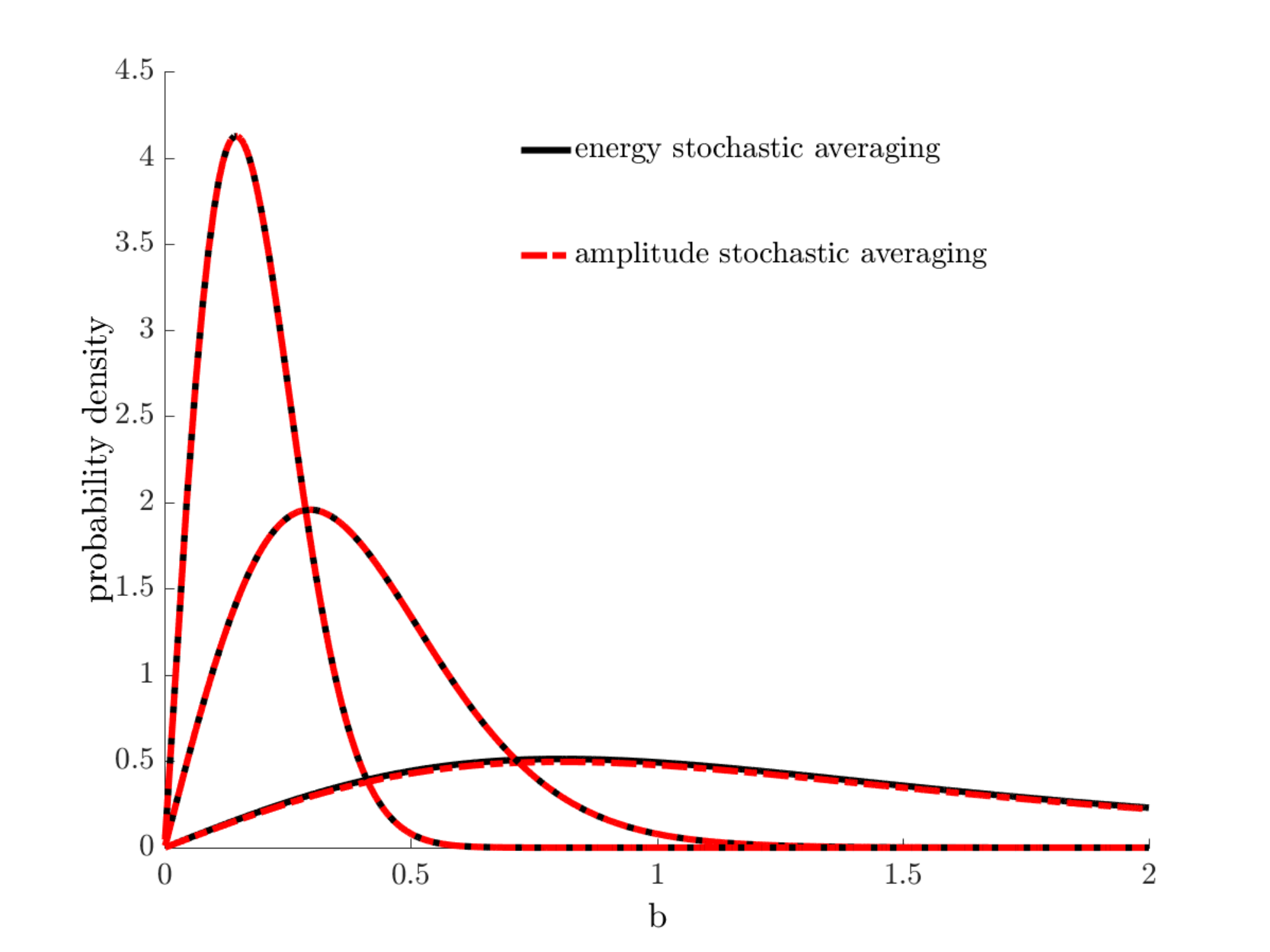}
		\caption{Comparison of energy stochastic averaging according to Theorem~\ref{eq:satzDostalX} (\protect\legendlineb{black}{solid}{2pt}) with amplitude stochastic averaging according to \cite{ariaratnam:1976} (\protect\legendlineb{red}{dashed}{2pt}\!) for the probability density of the oscillation amplitude $ b $ and various values of the damping parameter $\beta_1 \in \{0,03;0,18;0,72\}$.}\label{fig:linearMittelungsmethodenVergleich}
	\end{center}
\end{figure}

%
%
%

\section{Nonlinear system with external and parametric excitation}\label{sec:exampleParametric}
In this section a Duffing oscillator with softening cubic stiffness and linear, quadratic and cubic damping is investigated.
Such a Duffing oscillator with the scalar state variables $ x (t) $ and $ y (t) $ is given by 
\begin{equation}\label{eq:rollequation3a}
\begin{aligned}
\frac{\mathrm{d}}{\mathrm{d} t} x&=y,\\
\frac{\mathrm{d}}{\mathrm{d} t}y&=-\alpha_1 \,x + \alpha_3\, x^3 -\varepsilon\,(\beta_1\, y+\beta_2 \,|y|\,y+\beta_3\, y^3)  + \sqrt{\varepsilon}\,(\nu_1\,\xi_1(t) +  \nu_2\, x \,\xi_2(t)).
\end{aligned}
\end{equation}
We use the
Hamiltonian formalism and rewrite equation \eqref{eq:rollequation3a}
in terms of the total energy defined by the Hamilton function
\begin{equation}\label{eq:Hamiltonianxy}
H(x,y):=\frac{y^2}{2}+\alpha_1\frac{x^2}{2}-\alpha_3\frac{x^4}{4},
\end{equation}
where $\alpha_1,\alpha_3>0$. Then \eqref{eq:rollequation3a} can be
written as
\begin{equation}\label{eq:rollequation3}
\begin{aligned}
\frac{\mathrm{d}}{\mathrm{d} t}x&=\frac{\partial H(x,y)}{\partial y},\\
\frac{\mathrm{d}}{\mathrm{d} t}y&= -\frac{\partial H(x,y)}{\partial x}-\varepsilon\,\frac{\partial H}{\partial y} (\beta_1+\beta_2 \,|y|+\beta_3\, y^2) +\sqrt{\varepsilon}\, (\nu_1 \,\xi_1(t) +  \nu_2 \,x\, \xi_2(t)).
\end{aligned}
\end{equation}
Thereby we assume $\varepsilon<<1$ to be a small parameter. With this approach we have
transformed the original equation \eqref{eq:rollequation3a} to a weakly perturbed Hamiltonian
system \eqref{eq:rollequation3} excited by the stationary random
processes $\xi_1(t)$ and $\xi_2(t)$. The coefficients $\nu_1$ and
$\nu_2$ are additive and parametric noise intensities, respectively.
\begin{figure}[htbp]
		\begin{center}
		
	\begingroup%
	\makeatletter%
	\providecommand\color[2][]{%
		\errmessage{(Inkscape) Color is used for the text in Inkscape, but the package 'color.sty' is not loaded}%
		\renewcommand\color[2][]{}%
	}%
	\providecommand\transparent[1]{%
		\errmessage{(Inkscape) Transparency is used (non-zero) for the text in Inkscape, but the package 'transparent.sty' is not loaded}%
		\renewcommand\transparent[1]{}%
	}%
	\providecommand\rotatebox[2]{#2}%
	\ifx\svgwidth\undefined%
	\setlength{\unitlength}{332.10134711bp}%
	\ifx\svgscale\undefined%
	\relax%
	\else%
	\setlength{\unitlength}{\unitlength * \real{\svgscale}}%
	\fi%
	\else%
	\setlength{\unitlength}{\svgwidth}%
	\fi%
	\global\let\svgwidth\undefined%
	\global\let\svgscale\undefined%
	\makeatother%
	\begin{picture}(1,0.72224477)%
	\put(0,0){\includegraphics[width=\unitlength]{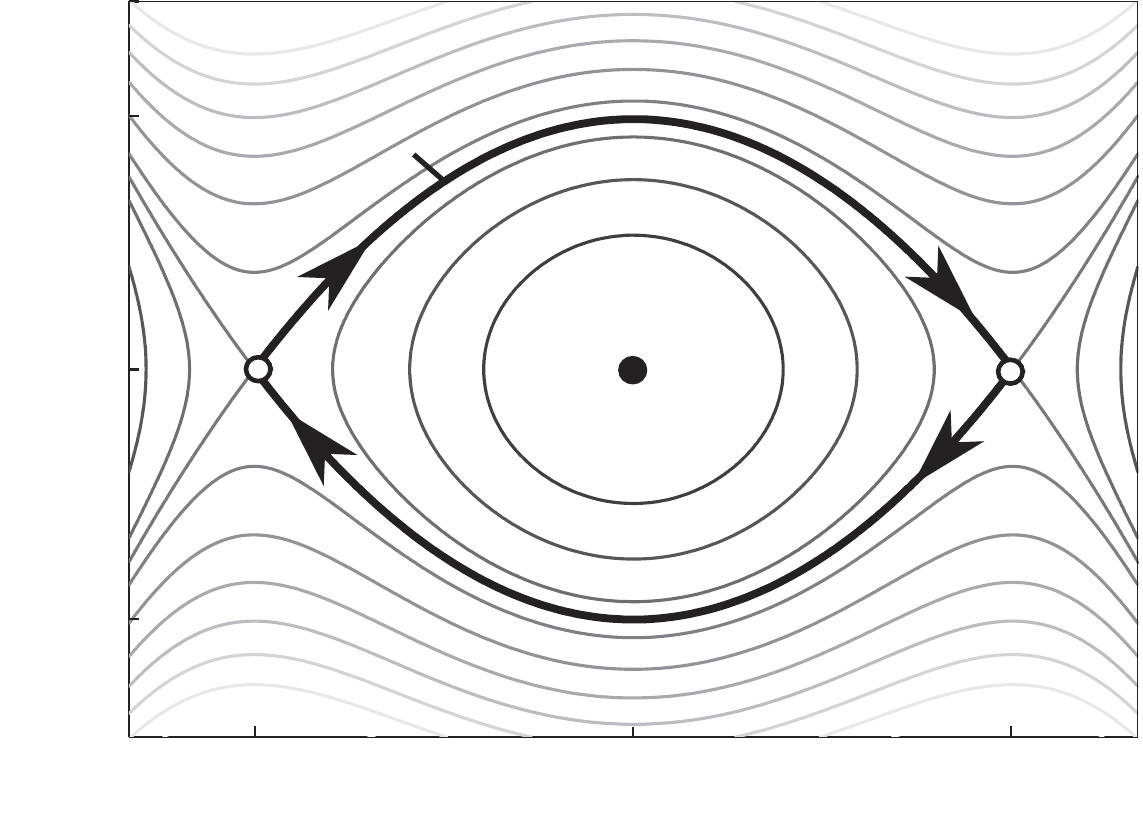}}%
	\put(0.05146644,0.40419841){\color[rgb]{0,0,0}\rotatebox{90}{\makebox(0,0)[lb]{\vspace{5mm}\smash{$y$}}}}%
	\put(0.07096555,0.40155227){\color[rgb]{0,0,0}\rotatebox{-1.78181179}{\makebox(0,0)[lb]{\smash{$0$}}}}%
	\put(0.19956667,0.45330201){\color[rgb]{0,0,0}\makebox(0,0)[lb]{\smash{$P_2$}}}%
	\put(0.33835309,0.60651231){\color[rgb]{0,0,0}\makebox(0,0)[lb]{\smash{$\gamma$}}}%
	\put(0.54584616,0.43046476){\color[rgb]{0,0,0}\makebox(0,0)[lb]{\smash{$S$}}}%
	\put(0.8686793,0.44712826){\color[rgb]{0,0,0}\makebox(0,0)[lb]{\smash{$ P_1$}}}%
	\put(0.54565181,0.03585608){\color[rgb]{0,0,0}\makebox(0,0)[lb]{\smash{$0$}}}%
	\put(0.54599995,0.00362451){\color[rgb]{0,0,0}\makebox(0,0)[lb]{\smash{$x$}}}%
	\put(0.85666832,0.03764191){\color[rgb]{0,0,0}\makebox(0,0)[lb]{\smash{$\sqrt{\frac{\alpha_1}{\alpha_3}}$}}}%
	\put(0.18819035,0.03872686){\color[rgb]{0,0,0}\makebox(0,0)[lb]{\smash{$-\sqrt{\frac{\alpha_1}{\alpha_3}}$}}}%
	\put(-0.00098449,0.18401901){\color[rgb]{0,0,0}\makebox(0,0)[lb]{\smash{$\!\!-\sqrt{\frac{\alpha_1^2}{2\,\alpha_3}}$}}}%
	\put(0.02536807,0.61846668){\color[rgb]{0,0,0}\makebox(0,0)[lb]{\smash{$\!\!\sqrt{\frac{\alpha_1^2}{2\,\alpha_3}}$}}}%
	\end{picture}%
	\endgroup%

	\caption{Contour lines of $H(x,y)$.}\label{fig:HamiltonianShipBar2}
		\end{center}
\end{figure}
The fixed points of system \eqref{eq:rollequation3} without
dissipation and random perturbation, i.e. $\varepsilon=0$, are
\begin{equation}\label{eq:hamiltonianfixedpoints}
P_1=\left(\sqrt{\frac{\alpha_1}{\alpha_3}}, 0\right);\;\;P_2=\left(-\sqrt{\frac{\alpha_1}{\alpha_3}}, 0\right);\;\;S=(0,0).
\end{equation}
These saddle points $P_1$ and $P_2$ are connected by the
heteroclinic orbit
\begin{equation}\label{eq:heteroclinicOrbit}
\gamma(x,y)=\left\{x,y\in \mathds{R},\, |x|<\sqrt{\frac{\alpha_1}{\alpha_3}} :\,y^2+\alpha_1\, x^2-\frac{\alpha_3}{2}\,x^4=\frac{\alpha_1^2}{2\,\alpha_3}\right\}
\end{equation}

The considered duffing oscillator oscillates only within the region bounded by the heteroclinic orbit $ \gamma $. This region is given by 
\begin{equation}\label{eq:heteroclinicOrbitInneres}
\mathbf{D}_{\gamma}:=\left\{x,y\in \mathds{R},\, |x|<\sqrt{\frac{\alpha_1}{\alpha_3}} :\,y^2+\alpha_1 \,x^2-\frac{\alpha_3}{2}\,x^4<\frac{\alpha_1^2}{2\,\alpha_3}\right\}.
\end{equation}
If the conservative system given by \eqref{eq:rollequation3a} with $\varepsilon=0$ is considered, then for every energy level $H(x,y)$ with $(x,y)\in \mathbf{D}_{\gamma}$ exactly one closed trajectory exists in the phase space of the conservative system.
These trajectories correspond to the contour lines of the Hamiltonian \eqref{eq:Hamiltonianxy} and are
shown in figure \ref{fig:HamiltonianShipBar2}.
The time derivative of Hamiltonian for system
\eqref{eq:rollequation3} is given by
\begin{equation}\label{eq:derivativeHamiltonian2}
\begin{aligned}
\frac{\mathrm{d}}{\mathrm{d} t} H=\varepsilon \,y^2(-\beta_1-\beta_2\,|y|-\beta_3\, y^2)+\sqrt{\varepsilon}\, y\, (\nu_1\,\xi_1(t) +  \nu_2\, x\, \xi_2(t))
\end{aligned}
\end{equation}
Combining the first equation of \eqref{eq:rollequation3} and
equation \eqref{eq:derivativeHamiltonian2}, and using
\begin{equation}
\begin{aligned}
Q(x,H):=y^2=2H-\alpha_1 x^2+\alpha_3\frac{x^4}{2}
\end{aligned}
\end{equation}
obtained from \eqref{eq:Hamiltonianxy}, we get the system
\begin{equation}\label{eq:averagingsystem1}
\begin{aligned}
\frac{\mathrm{d}}{\mathrm{d} t} x&=\sqrt{Q(x,H)},\\
\frac{\mathrm{d}}{\mathrm{d} t} H&=\varepsilon \,Q(x,H)\,(-\beta_1-\beta_2 \,\sqrt{Q(x,H)}-\beta_3 \,Q(x,H))\\
&\hspace{3.0cm}+\sqrt{\varepsilon}\, \sqrt{Q(x,H)} \,(\nu_1\,\xi_1(t) +  \nu_2\, x\,\xi_2(t)).
\end{aligned}
\end{equation}

Since as before  a property of  equation \eqref{eq:averagingsystem1} is that the energy level $H$ changes slowly compared to the oscillations of the variable $x$, stochastic averaging can be applied.

\subsection{White noise case}\label{subsec:whiteNoise}
First we state the results for averaging system
\eqref{eq:rollequation3} subjected to white noise excitation.
Therefore, let $\int_0^t \xi_{s} ds=W_t$, where $W_t$ is a
standard Wiener process and $dW_t$ its increment.
Then, using the It\^{o} lemma, we obtain from system
\eqref{eq:rollequation3}
\begin{equation}\label{eq:systemforaverageWhiteNoise}
\begin{aligned}
dx&=\sqrt{Q(x,H)}dt,\\
dH&=\varepsilon \{Q(x,H)(-\beta_1-\beta_3 Q(x,H))+F(x)\}dt+\sqrt{\varepsilon} \sqrt{Q(x,H)} (\nu_1 + x \nu_2) dW_t.
\end{aligned}
\end{equation}
Averaging \eqref{eq:systemforaverageWhiteNoise} according to
\cite{Khasminskii:1968} we obtain the one dimensional It\^{o}
equation
\begin{equation}\label{eq:onedimIto}
dH=m(H)d\tau+\sigma^2(H) dW_{\tau},
\end{equation}
where
\begin{equation}\label{eq:averagedHm}
\begin{aligned}
m(H)=\frac{1}{T(H)}\int_{0}^{T(H)}\left\{Q(x(t),H)G(x(t),H)+F(x(t))\right\}dt,
\end{aligned}
\end{equation}
\begin{equation}\label{eq:averagedHsigma}
\begin{aligned}
\sigma^2(H)=\frac{2}{T(H)}\int_{0}^{T(H)}\left\{Q(x(t),H)F(x(t))\right\}dt.
\end{aligned}
\end{equation}
Here, $T(H)$ is the period of one oscillation of the fast variable
$x$ in the absence of noise and damping, i.e. $\varepsilon=0$,
starting at the energy level $H$. Additionally we use
\begin{equation}
G(x,H)=-\beta_1 -\beta_3 Q(x(t),H),\;\;\;\;\;\;F(x)=\frac{1}{2}\left(\nu_1^2+\nu_2^2x^2\right).
\end{equation}
For $0 \leq H < \alpha_1^2/(4\alpha_3)$ the period is given by
\begin{equation}\label{eq:periodTH}
T(H)=\int_0^{T(H)}dt=2\int_{-b(H)}^{b(H)}\frac{dx}{\sqrt{Q(x,H)}}=\frac{4}{q}K(k),
\end{equation}
where
\begin{equation}
\begin{aligned}
q=a \sqrt{\frac{\alpha_3}{2}},
\end{aligned}
\end{equation}
\begin{equation}\label{eq:a}
\begin{aligned}
a=\sqrt{\frac{4H}{b^2\alpha_3}}.
\end{aligned}
\end{equation}
The limits of integration $\pm b(H)$ are the points where
$y=\sqrt{Q(x,H)}=0$ and the periodic orbit intersects the x-axis,
i.e. $b(H)$  is the maximum value of $x$ for each energy level $H$
and is given by
\begin{equation}\label{eq:xplus}
\begin{aligned}
b=&\sqrt{-\frac{-\alpha_1+\sqrt{\alpha_1^2-4\alpha_3H}}{\alpha_3}}.
\end{aligned}
\end{equation}
The function $K(k)$ is the complete elliptic integral of the first
kind, cf. \cite{byrd:1954}. The elliptic modulus $k$ is given by
\begin{equation}
\begin{aligned}
k=\frac{b}{a}.
\end{aligned}
\end{equation}
%
The integrals appearing in the equations for drift
\eqref{eq:averagedHm} and diffusion \eqref{eq:averagedHsigma} exist
for $0 \leq H < \alpha_1^2/(4\alpha_3)$. They can be computed in
terms of complete elliptic integrals of the first and second kind,
$K(k)$ and $E(k)$, respectively. Then we get
\begin{equation}
\begin{aligned}
m(H)=B(H)+C(H),
\end{aligned}
\end{equation}
\begin{equation}
\begin{aligned}
\sigma^2(H)=\nu_1^2 B_1(H) + \nu_2^2 B_2(H),
\end{aligned}
\end{equation}
\begin{equation}
\begin{aligned}
B(H)=- \left( \beta_{{1}}+2\,\beta_{{3}}H \right)
B_{{1}}(H)+\alpha_{{1}}\beta
_{{3}}B_{{2}}(H)-\frac{1}{2}\,\alpha_{{3}}\beta_{{3}}B_{{3}}(H),
\end{aligned}
\end{equation}
\begin{equation}
\begin{aligned}
B_1(H)=\frac{1}{3}\,{q}^{2} \left[ {b}^{2}-{a}^{2} +\left( {a}^{2}+{b}^{2} \right) \frac{E\left( k \right)}{K\left( k \right)} \right],
\end{aligned}
\end{equation}
\begin{equation}
\begin{aligned}
B_2(H)=\frac{1}{15}\,{q}^{2} \left[3\,{a}^{2}{b}^{2}-2\,{a}^{4}-{b}^{4}  +\left( 2\,{a}^{4}+2\,{b}^{4}-2\,{a}^{2}{b}
^{2} \right) \frac{E\left( k \right)}{K\left( k \right)} \right],
\end{aligned}
\end{equation}
\begin{equation}
\begin{aligned}
B_3(H)={\frac {1}{105}}\,{q}^{2} \Bigg[ &3\,{b}^{4}{a}^{2}+9\,{b}^{2}{a}^{4}-4\,{b}^{6}-8\,{a}^{6}\\
&+\left(8\,{a}^{6}+8\,{b}^{6}-5\,{b}^{2}{a}^{4}-5\,{b}^{4}{a}^{2} \right) \frac{E\left( k \right)}{K\left( k \right)} \Bigg],
\end{aligned}
\end{equation}
\begin{equation}
\begin{aligned}
C(H)=\frac{1}{2}\,{\nu_{{1}}}^{2}-\frac{1}{2}\,{\nu_{{2}}}^{2}{a}^{2}
\left({\frac{E\left( k \right)}{K\left( k \right)}}-1\right).
\end{aligned}
\end{equation}
Thus we have obtained an one-dimensional It\^{o} equation
\eqref{eq:onedimIto} for the process of total energy $H(t)$ of
system \eqref{eq:rollequation3} subjected to white noise excitation.
%
%
\subsection{Real noise case}
For the non-white noise case it is necessary to determine the
functions $x(t+s)$ and $y(t+s)$ in terms of $x(t)$ and $y(t)$
in order to apply stochastic averaging and obtain a closed form
solution. Therefore, a solution of the differential
equation \eqref{eq:rollequation3} is needed, which can be obtained
for $\varepsilon=0$ in terms of Jacobian elliptic functions. Then
addition formulas for Jacobian elliptic functions can be used to
eliminate the time shift and obtain the states $x$ and $y$ as
functions of time $t$ only. In the case $0\leq
H<\alpha_1^2/(4\alpha_3)$  we get for
$\varepsilon=0$ a solution of \eqref{eq:rollequation3} by
\begin{equation}\label{eq:xsn}
\begin{aligned}
x(t)=b\;\sn(qt,k),\;\;\;\;\;\;\;x(t+s)=b\;\sn(qt +qs,k),
\end{aligned}
\end{equation}
\begin{equation}\label{eq:ycndn}
\begin{aligned}
&\frac{dx}{dt}=y(t)=\sqrt{Q(x(t),H)}=b\:q\;\cn(qt,k)\:\dn(qt,k),\\
&y(t+s)=\sqrt{Q(x(t+s),H)}=b\:q\;\cn(qt +qs,k)\:\dn(qt +qs,k),
\end{aligned}
\end{equation}
with
$b\:q=\sqrt{2H}$. The expressions for $x(\cdot)$ and $y(\cdot)$
contain the Jacobian elliptic functions $\sn(\cdot,k)$, $\cn(\cdot,k)$
and $\dn(\cdot,k)$, see \cite{byrd:1954}. The elliptic modulus is the
same as in the white noise case. We use the abbreviations $\sn:=\sn(qt,k), \;\cn:=\cn(qt,k), \; \dn:=\dn(qt,k),  \;u:=qt.$
In addition if the subscript $s$ or $t+s$ is used, we refer to the
argument $qs$ or $q(t+s)$, respectively.
%
Applying stochastic averaging according to Theorem \ref{eq:satzDostalX} to system \eqref{eq:averagingsystem1} for $0\leq H <\frac{\alpha_1^2}{4 \alpha_3}$,
we get the one-dimensional It\^{o} stochastic differential equation
	\begin{equation}\label{eq:averageIto}
	\mathrm{d}H=m(H)\,\mathrm{d}\tau+\sigma(H)\, \mathrm{d}W_{\tau}.
	\end{equation}
for the energy level $H$, where $\mathrm{d}W_t$ is a standard Wiener process.
The corresponding drift $m(H)$ and diffusion $\sigma(H)$ in equation \eqref{eq:averageIto} are given by
\begin{equation}\label{eq:averagedm}
\begin{aligned}
m(H)&=\frac{2}{T\,q}\int_{-\infty}^0 \Big\{R_{\xi_1\xi_1}(s) \,\nu_1^2\int_{-K(k)}^{K(k)}
\frac{\cn_{t+s}\,\dn_{t+s}}{\cn_{t}\,\dn_{t}} \,\mathrm{d}u\\
&\hspace{1.8cm}+R_{\xi_2\xi_2}(s) \,b^2\,\nu_2^2\int_{-K(k)}^{K(k)}\sn\,\sn_{t+s}\,\frac{\cn_{t+s}\,\dn_{t+s}}{\cn_{t}\,\dn_{t}} \,\mathrm{d}u\Big\}\,\mathrm{d}s\\
&\hspace{2.8cm}+ \frac{1}{T}\int_{0}^{T}Q(x(t),H)\,G(x(t),H)\,\mathrm{d}t
\end{aligned}
\end{equation}
\begin{equation}\label{eq:averagedsigma}
\begin{aligned}
\sigma^2(H)&=\frac{4\,b^2\,q}{T}\int_{-\infty}^\infty  \Big\{R_{\xi_1\xi_1}(s)\,\nu_1^2\int_{0}^{K(k)}\hspace{-7mm} \cn\,\dn\, \cn_{t+s}\,\dn_{t+s}\,\mathrm{d}u\\
&\hspace{2.0cm}+R_{\xi_2\xi_2}(s)\,b^2\,\nu_2^2\int_{0}^{K(k)}\hspace{-7mm} \sn\,\sn_{t+s}\, \cn\,\dn\, \cn_{t+s}\,\dn_{t+s}\,\mathrm{d}u \Big\} \,\mathrm{d}s
\end{aligned}
\end{equation}
Thereby we assume, that the involved stochastic processes $\xi_1(t)$ and $\xi_2(t)$ are stationary with mean zero and have
autocorrelation function
$R_{\xi_t\xi_t}(s)=E\{\xi_t\xi_{t+s}\}$, which approaches zero
sufficiently fast as $s$ increases.

\subsection{Results for first passage times}
With the theory proposed in this work, the first 
moment of the first passage time of the Duffing oscillator with negative cubic stiffness and nonlinear damping, as given in equation \eqref{eq:rollequation3a}, is determined. 
First, the It\^{o} equation~\eqref{eq:averageIto} for the averaged energy $H$ of this Duffing oscillator with equation~\eqref{eq:averagedm} for the drift $m$ and equation \eqref{eq:averagedsigma} for the diffusion $\sigma$ is used, which is a one dimensional diffusion process.
Then, Theorem~1 
is applied in order to calculate the mean 
$M(H_0,H_c)=M_1(H_0,H_c)$ 
of the first passage time until the energy process $H(t)$ reaches the value $0< H_c <\frac{\alpha_1^2}{4 \alpha_3}$ starting at $H_0\in(0,H_c)$. 
%
Using the formula~\eqref{eq:xplus}, the energy can be transformed to the oscillation amplitude $b(t)$ of the Duffing oscillator.
The chosen parameter values for the softening Duffing oscillator are summarized in Table~\ref{tab:CompData}.
These values correspond to a 200 m long ship, as described in \cite{dostal:2012}, which is traveling with a velocity of 8 knots in a sea state with wave encounter angle $\chi=30^{\circ}$ a significant wave height $H_s=5$ m and mean wave period of $T_1=11.5$ s. Thereby the sea state has a JONSWAP spectral density, cf. \cite{dostal:2012}. 
The resulting oscillation amplitude $b$ is the roll angle of the ship in degrees. The critical Energy $H_c=0.529$ is chosen, which corresponds to the roll oscillation amplitude $b_c=40$ degrees. 
Using the parameter values from  Table~\ref{tab:CompData}, the functions for drift $m(b)$ and diffusion $\sigma(b)$ are shown in Figures~\ref{fig:exitTimeDrift} and \ref{fig:exitTimeDiffusion}, respectively.
The results for the mean first passage time of the softening Duffing oscillator, for reaching the boundary $b_c=40$ degrees starting at $b_0$ are shown in Figure~\ref{fig:exitTimeDuffingComparison} using Theorem 1 and Corollary 1.
The computation time of the corresponding formula \eqref{eq:meanexittime} from Theorem 1 using standard numerical quadrature is about 15 seconds on an ordinary desktop computer.
The mean first passage time obtained from Corollary 1 is only correct, if the starting oscillation amplitude $b_0$ is very close to zero. For higher values of the starting oscillation amplitude $b_0$, the mean first passage times have to be determined using Theorem~1.

 \begin{table}
 	\begin{center}
 		\caption{Parameter values used for computations}\label{tab:CompData}
 		\begin{tabular}[h]{l l l l l}
 			\hline  
 			$\alpha_1=3.187$ & $\alpha_3=4.164$ & $\beta_1 = 0.655$& $\beta_2 =0.921$ & $\beta_3=0$ \\
 			  $\nu_1= 0.018$ & $\nu_2=1.783$& $H_c=0.529$ &$b_c=40^{\circ}$& $\varepsilon=0.1$\\ 
 			\hline
 		\end{tabular}
 	\end{center}
 \end{table}   
   
 \begin{figure}[htbp]
 	\begin{center}
 		\includegraphics[width=10cm]{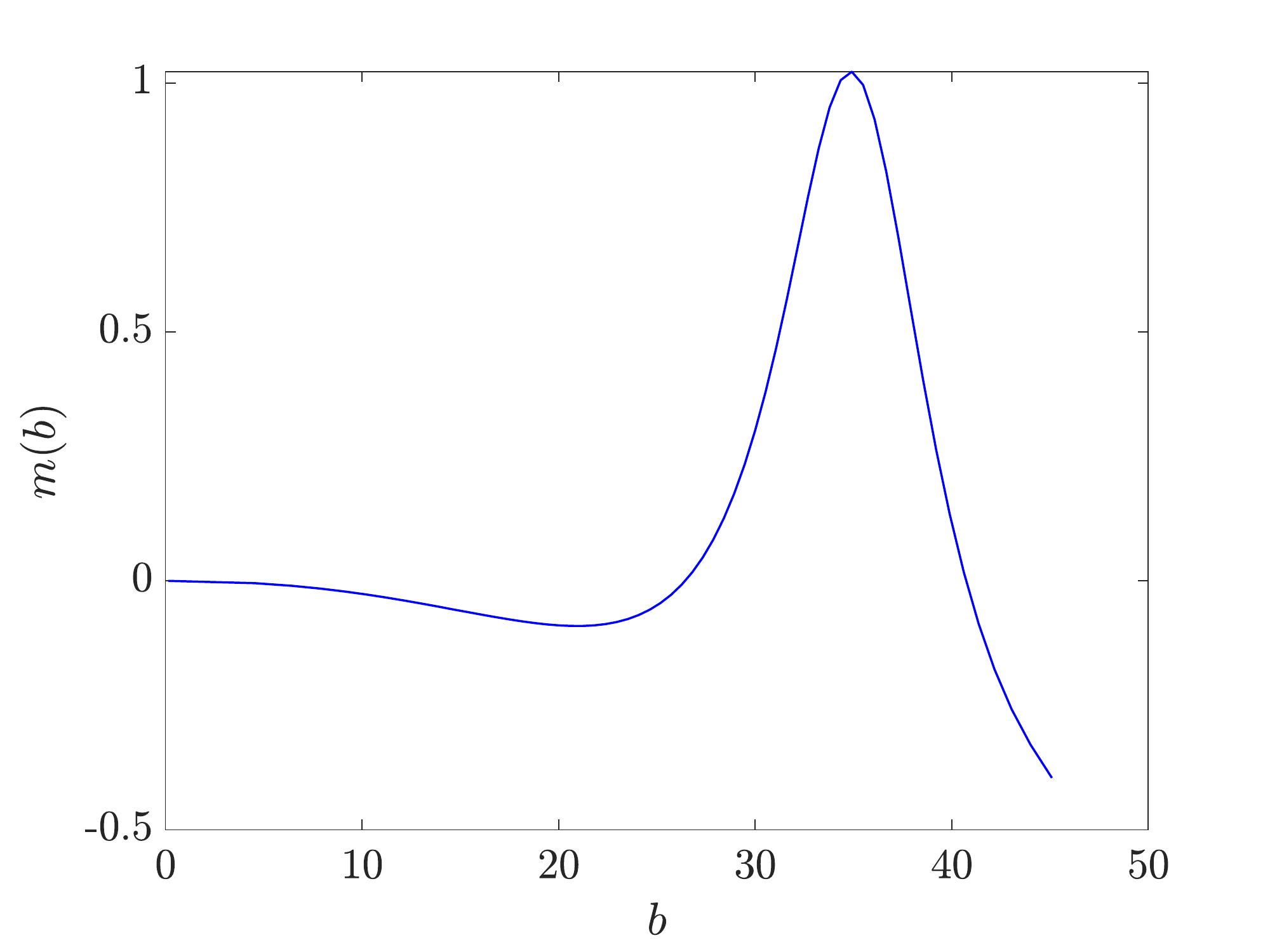}
 		\caption{Drift $m(b)$ of the Duffing oscillator for the parameters from Table~\ref{tab:CompData}.}\label{fig:exitTimeDrift}
 	\end{center}
 \end{figure}  
   
  \begin{figure}[htbp]
  	\begin{center}
  		\includegraphics[width=10cm]{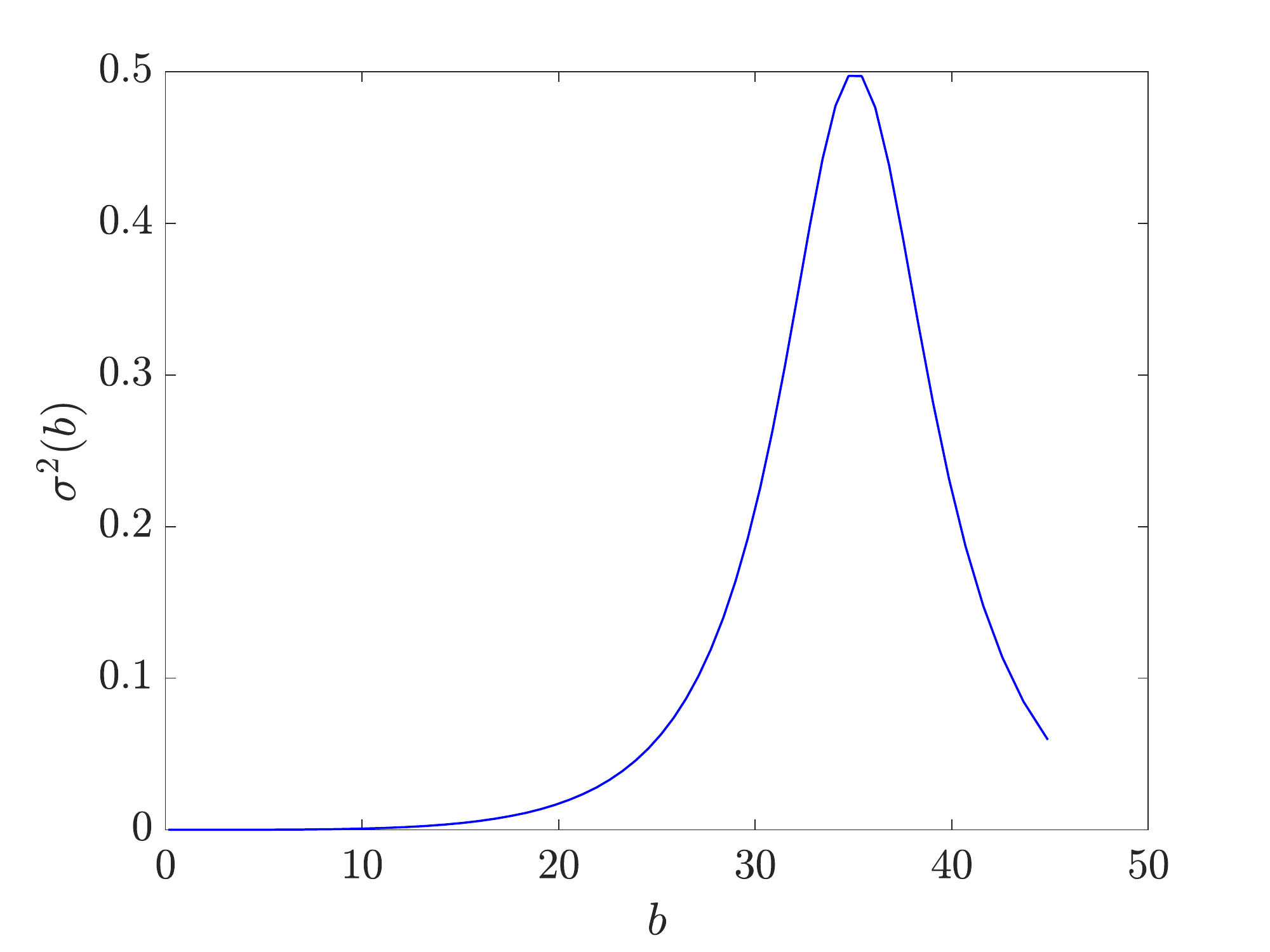}
  		\caption{Diffusion $\sigma(b)$ of the Duffing oscillator for the parameters from Table~\ref{tab:CompData}.}\label{fig:exitTimeDiffusion}
  	\end{center}
  \end{figure}

\begin{figure}[htbp]
	\begin{center}
		\includegraphics[width=11cm]{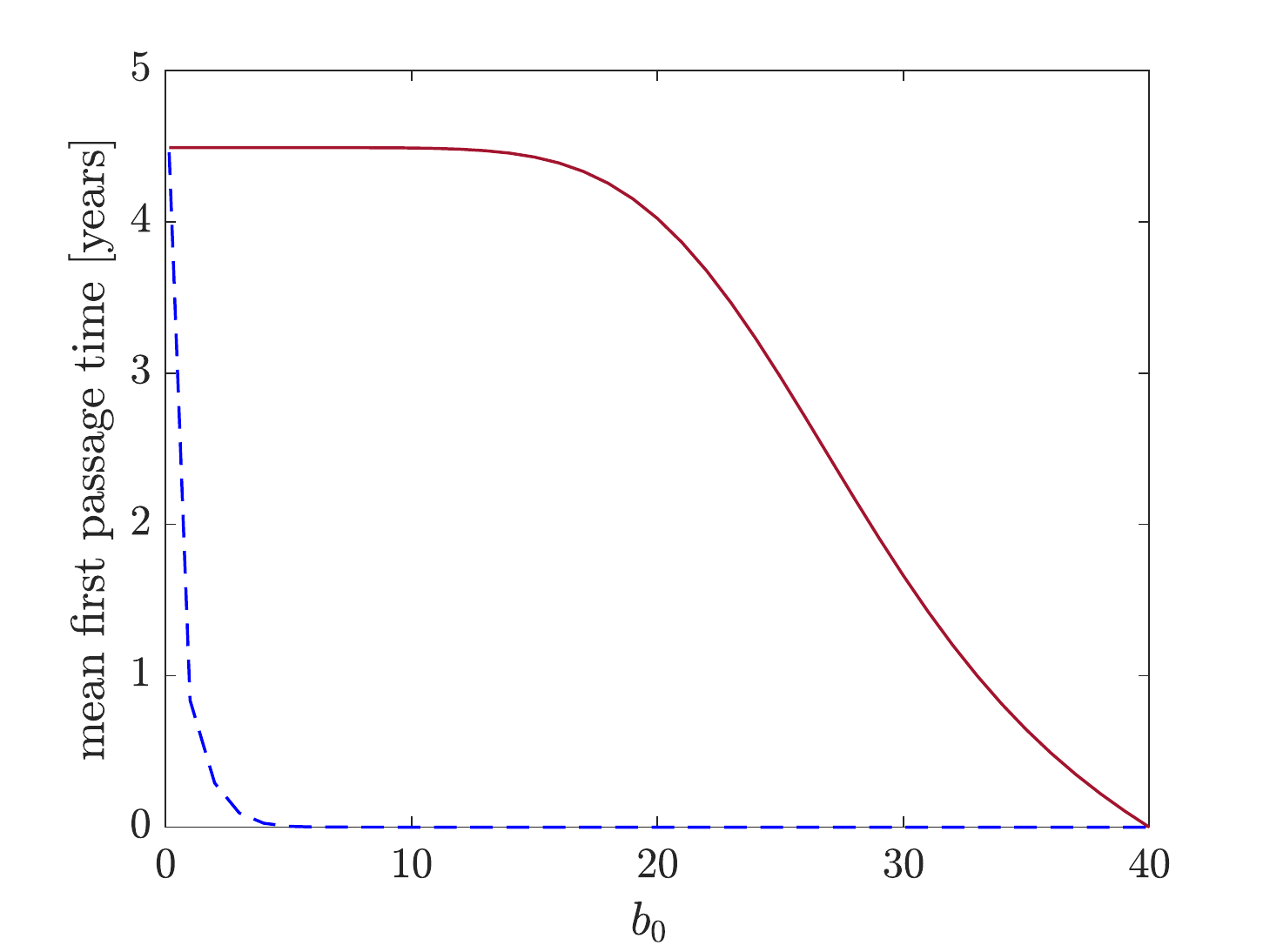}
		\caption{Comparison of mean first passage times for the Duffing oscillator for different starting oscillation amplitudes $b_0$. Solutions from Theorem~1~(\textcolor{red}{-}) and from Corollary~1(\textcolor{blue}{--  --}).}\label{fig:exitTimeDuffingComparison}
	\end{center}
\end{figure}

\section{Conclusions}
The moments of the first passage time are obtained for a one-dimensional nonlinear diffusion
processes with an entrance boundary by examining the boundary behavior. Thereby previous theorems were extended, such that they are valid for arbitrary initial points of the considered diffusion. The mean and variance of the first passage time to reach the boundary of a domain are validated with known analytical formulas, which perfectly match.
Results of the first passage times for a softening Duffing oscillator are obtained as well, which are important for the determination of dangerous ship roll dynamics in ocean waves. 
This shows, that the proposed theory is applicable for important problems. The necessary computation time is very low, since the proposed analytical expressions for the moments of the first passage times only involve integrals, which can be evaluated using standard quadrature formulas.

\end{document}